\documentclass[12pt,a4paper]{amsart}

\usepackage{graphicx}
\usepackage[small]{caption}
\usepackage{datetime,verbatim,array}
\usepackage{amsthm, amsmath, amsfonts, amssymb}
\usepackage{mathrsfs, rotating}
\usepackage[usenames]{color}
\usepackage[margin=2.9cm]{geometry}
\usepackage[latin2]{inputenc}
\usepackage{t1enc}
\usepackage{epstopdf} 
\usepackage[colorlinks, citecolor=blue, filecolor=black, linkcolor=red, urlcolor=blue]{hyperref}
 
\setlength{\textheight}{200mm} 
\setlength{\topmargin}{2.0cm}
\setlength{\textwidth}{150mm} 
\setlength{\oddsidemargin}{0.46cm}
\setlength{\evensidemargin}{0.46cm}
\setlength{\parindent}{15pt}

\usepackage{comment}
\usepackage[all,cmtip]{xy}

\usepackage{pinlabel}

\theoremstyle{plain}
\newtheorem{theorem}{Theorem}[section]

\newtheorem{lemma}[theorem]{Lemma}
\newtheorem{proposition}[theorem]{Proposition}
\newtheorem{corollary}[theorem]{Corollary}

\newtheorem{remark}[theorem]{Remark}

\def\N{\mathbb{N}}				

\def\pminus{+-}
\def\mplus{-+}
\def\mminus{--}

\def\a{\alpha}
\def\b{\beta}
\def\g{\gamma}

\newcommand{\nc}{\newcommand}

\nc{\imin}{i_{\textrm{min}}}

\nc{\p}[1]{\medskip\noindent{\em #1.}}
\nc{\margin}[1]{\marginpar{\scriptsize #1}}

\title[Super efficient geodesics in the complex of curves]{Super efficient geodesics in the complex of curves}

\author{Xifeng Jin and William Menasco}

\address{Xifeng Jin\\
Department of Mathematics\\
University at Buffalo--SUNY\\
Buffalo, NY 14260-2900, USA\\
xifengji@buffalo.edu}

\address{William W. Menasco\\
Department of Mathematics\\
University at Buffalo--SUNY\\
Buffalo, NY 14260-2900, USA\\ menasco@buffalo.edu}

\subjclass[2010]{Primary: 57M60. Secondary: 20F38.}

\keywords{complex of curves, efficient geodesics, intersection growth inequality, super efficient geodesics, super efficiency.}

\date{\currenttime \ \ \today}

\begin{document}
\vspace*{-1.25in}

\begin{abstract}
We introduce a subclass of the efficient geodesics, called {\em super efficient geodesics}, that the bound of intersection number in the initial efficiency only depends on the genus of  surface.  For any two vertices, $v , w \in \mathcal{C}(S_g)$, in the complex of curves 
of a closed oriented surface of genus $g \geq 2 $, and any efficient geodesic, $v = v_1 , \cdots , v_{\text{d}}=w$, it was previously established 
by Birman, Margalit and the second author \cite{BMM} that
there is an explicitly computable list of at most ${\text{d}}^{(6g-6)}$ candidates for the $v_1$ vertex.  In this note we establish a bound for this computable list for the super efficient geodesics that is independent of ${\text{d}}$-distance and only dependent on genus.  The proof relies on an intersection growth inequality (IGI) between the intersection number of two curves and their distance in the complex of curves, together with a thorough analysis of the dot graph associated with the intersection sequence. 
\end{abstract}

\maketitle

\section{Introduction}
\label{section: introduction}

\subsection{Initial background}
Let $S_g$ be a closed oriented surface of genus $g \geq 2$.  The \emph{complex of curves} or \emph{curve complex} of the surface $S_g$ is a simplicial complex such that each vertex corresponds to the isotopy class of an essential simple closed curve, and $n+1$ vertices form an $n$-simplex of $\mathcal{C}(S_g)$ if their representatives can be realized disjointly.  At times we will use the words ``vertices'' and ``curves'' interchangeably, and use \emph{curve} as essential simple closed curve.  For any two curves $v$ and $w$ in $\mathcal{C}(S_g)$, the \emph{distance} $d(v,w)$ is the minimal number of edges in $\mathcal{C}(S_g)$ connecting $v$ to $w$.  A \emph{geodesic} in $\mathcal{C}(S_g)$ is a sequence of vertices $\Gamma=(\g_i)_{i \in I}$ such that $d (\g_i,\g_j)=|i-j|$ for all $i,j \in I$. 

Harvey \cite{Harvey} introduced the curve complex in 1978 as a tool for studying the mapping class group of surfaces.  In 1996, Masur and Minsky \cite{MM1} proved the seminal result that $\mathcal{C}(S_g)$ is $\delta$-hyperbolic. However, the computation of distance in $\mathcal{C}(S_g)$ is still very intimidating. The first attempt of such algorithm was due to Leasure \cite{L} in his thesis in 2002. After that, there are several other algorithms found by Shackleton \cite{S},  Watanabe \cite{Watanabe1} and Webb \cite{Webb}, in which all algorithms utilize the notion of \emph{tight geodesics} that were first constructed in a sequel \cite{MM2} of Masur-Minky's work \cite{MM1}. In \cite{BMM} Birman-Margalit-Menasco introduce \emph{efficient geodesics}, an alternative finite set of geodesics for computing the distance in $\mathcal{C}(S)$.  
 
To elaborate, suppose $v_0, v_1, \cdots, v_{\text{d}}$ is a geodesic of length ${\text{d}} \geq 3$ in $\mathcal{C}(S_g)$, and $\a_0,\a_1,\cdots , \a_{\text{d}}$ are associated curve representatives in $S_g$.  Initially focussing on the triple, $\a_0, \a_1, \a_{\text{d}}$, assume that they pairwise are in minimal position---$\a_0 \cap \a_1 = \varnothing$, and no subarc of $\a_0$ or $\a_1$ is cobounding a bigon disc with a subarc of $\a_{\text{d}}$.   A \emph{reference arc} for $\a_0, \a_1, \a_{\text{d}}$ is an arc, $\g \subset S_g \setminus (\a_0 \cup \a_{\text{d}})$, that is in minimal position with $\a_1$---again, no cobounding a bigon disc between $\a_1$ and $\g$. The geodesic $v_0, v_1, \cdots, v_{\text{d}}$ is \emph{initially efficient} if $|\a_1 \cap \g| \leq {\text{d}} - 1$ for all possible reference arcs. Moreover, the geodesic is called \emph{efficient}, if the oriented geodesic $v_k, v_{k+1}, \cdots, v_{\text{d}}$ is initially efficient for $0 \leq k \leq {\text{d}} - 3$ and the oriented geodesic $v_{\text{d}}, v_{{\text{d}}-1}, v_{{\text{d}}-2}, v_{{\text{d}}-3}$ is also initially efficient.

The existence of initially efficient geodesics is established in \cite{BMM}.
Specifically, it is shown that  there that an explicitly computable list of at most $$  {\text{d}}^{\ 6g - 6} $$ vertices, $v_1$, which can appear as the first vertex of an (initially) efficient geodesic $v = v_0,v_1, \cdots,v_{\text{d}} = w$. In \cite{BMM} it is argued that any geodesic could be replaced by an initially efficient geodesic; that is, $|\a_1 \cap \g| \leq \text{d} - 1$, for all possible reference arcs $\g$.  Here, the notation $|\a_1 \cap \g|$ refers to the number of minimal intersections which is achieved when there is no cobounding bigon disc.  Equivalently, when $\a_1$ and $\g$ intersect minimally within the isotopy class $v_1$.

\subsection{Main results}
\label{main results}
Our key advancement in this note is the establishment of a subclass of initially efficient geodesics, called {\em initially super efficient geodesics}, for which the intersection bound with the reference arcs is $\min({\text{d}} - 1, 15 \cdot (6g - 8))$ for genus $g \geq 3$ and $\min({\text{d}} - 1, 44)$ for $g=2$.  
The {\em super efficient geodesics} follow similarly as the efficient geodesics in \cite{BMM} once we establish the initially super efficient geodesics. In particular, the existence of initially super efficient geodesics comes from changing the distance-dependent intersection bound to one that is dominated by a linear function of genus. 

\begin{theorem}
\label{theorem: super efficiency}
Let $v$ and $w$ be two vertices in $\mathcal{C}(S_{g \geq 2})$ with distance $d(v,w) \geq 3$. There exists an (initially) super efficient geodesic $v = v_0,v_1, \cdots ,v_\text{d} = w$  in $\mathcal{C}(S_g)$ connecting $v$ and $w$. More precisely, suppose $\a_0$, $\a_1$ and $\a_{\text{d}}$ are minimally intersecting representatives of
$v_0$, $v_1$ and $v_{\text{d}}$, respectively.  Let $\g$ be any reference arc for the pair $(\a_0 , \a_{\text{d}})$.
Then for $g=2$, we have $| \a_1 \cap \g | \leq \min({\text{d}} - 1, 44) $; and for $g \geq 3$, we have $| \a_1 \cap \g | \leq \min({\text{d}} - 1, 15 \cdot (6g - 8)) $.
\end{theorem}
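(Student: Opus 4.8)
The plan is to begin from a minimal-complexity geodesic $v_0,\dots,v_{\text d}$, which is efficient and hence already satisfies the \cite{BMM} estimate $|\alpha_1\cap\gamma|\le {\text d}-1$; the $\min$ in the statement then records whichever of ${\text d}-1$ and the genus bound is smaller, so from now on I assume ${\text d}-1$ exceeds the genus bound and aim only to prove the constant estimate. First I would fix minimal-position representatives $\alpha_0,\alpha_1,\alpha_{\text d}$ and a reference arc $\gamma$, and extract the \emph{intersection sequence}: reading along $\gamma$, record each crossing with $\alpha_1$ together with the complementary region of $S\setminus(\alpha_0\cup\alpha_{\text d})$ that the adjacent sub-arc of $\gamma$ occupies. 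Since $i(v_0,v_1)=0$ the curves $\alpha_0$ and $\alpha_1$ are disjoint, so all of these crossings take place inside $S\setminus\alpha_0$, and the associated dot graph is drawn there. A \emph{monotone run} is a maximal block of consecutive dots whose sub-arcs traverse a single complementary region coherently (without backtracking), and $|\alpha_1\cap\gamma|$ is the sum of the lengths of the runs.

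The core of the argument is to cap the length of a single monotone run by an absolute constant. I would show that a run of length exceeding this constant supports an \emph{untwisting surgery}: cutting $\alpha_1$ along a sub-arc of $\gamma$ and rejoining it using a parallel copy of a sub-arc of $\alpha_0$ produces a new essential curve $v_1'$ that is again disjoint from $\alpha_0$, so $d(v_0,v_1')=1$, but whose geometric intersection with $\alpha_{\text d}$ has strictly decreased because the surgery removes exactly the coherent winding recorded by the run. This lowers the complexity $i(v_0,v_1')+i(v_1',v_{\text d})$ below that of the original first step, contradicting minimality---provided one knows that $v_1'$ still sits at distance ${\text d}-1$ from $v_{\text d}$. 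This is precisely where the intersection growth inequality enters: the triangle inequality already gives $d(v_1',v_{\text d})\ge {\text d}-1$, and the IGI, which bounds $d(\cdot,\cdot)$ logarithmically in the intersection number, certifies that the bounded reduction in $i(v_1',v_{\text d})$ effected by a single untwisting cannot raise the distance, so $d(v_1',v_{\text d})={\text d}-1$ and $v_1'$ genuinely yields a competing geodesic of smaller complexity. Tracking the IGI constants through this comparison is what pins the maximal run length at $15$ for $g\ge 3$.

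With run lengths capped, it remains to bound the number of runs, and this is a purely topological count. The number of complementary regions of $S\setminus(\alpha_0\cup\alpha_{\text d})$---equivalently the number of distinct regions a reduced arc can monotonically traverse---is controlled by the Euler characteristic of $S$ and grows linearly in the genus; the two regions flanking a fixed passage of $\alpha_1$ do not spawn new runs, which is what trims the naive count $6g-6$ to $6g-8$. Multiplying the per-run cap by this region count gives $|\alpha_1\cap\gamma|\le 15\cdot(6g-8)$ for $g\ge 3$. The genus-two surface is too small for the generic region count to apply---several complementary pieces degenerate or coincide---so I would treat $g=2$ by a direct enumeration of the finitely many admissible dot-graph patterns, which sharpens the constant to $44$. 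Combined with the reduction in the first paragraph, this produces the stated $\min$ in both cases, and the surgered minimal-complexity geodesic is the super efficient geodesic whose existence is claimed.

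I expect the main obstacle to be the untwisting surgery and its distance bookkeeping. One must verify that the surgery (i) returns an \emph{essential} simple closed curve in minimal position with $\alpha_{\text d}$, so that no compensating bigons reintroduce the intersections just removed; (ii) is genuinely disjoint from $\alpha_0$, so that $v_1'$ remains adjacent to $v_0$; and, hardest, (iii) is still at distance exactly ${\text d}-1$ from $v_{\text d}$ rather than ${\text d}$, which requires the quantitative IGI comparison rather than a soft triangle-inequality estimate. Extracting the exact extremal run length---and hence justifying the precise constants $15$ and $44$---will demand a careful case analysis of the admissible monotone slopes in the dot graph and of the degenerate configurations peculiar to low genus; this case analysis, together with the verification that each surgery strictly decreases complexity, is what drives both the bound and the termination of the construction.
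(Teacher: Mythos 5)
Your scheme shares the paper's skeleton (minimal-complexity geodesic, dot graph, pigeonhole over roughly $6g-8$ classes, the IGI, the constant $15$), but the step you yourself flag as hardest---item (iii), the distance bookkeeping for the untwisted curve $v_1'$---fails, and it fails in a way the IGI cannot repair. The IGI is one-sided: given $d(v,w)=d$ it yields the lower bound $2^{d-3}(2g-1)\le|\alpha\cap\beta|$, equivalently only the logarithmic upper bound $d\le 2+\log_2\bigl(|\alpha\cap\beta|/(g-0.5)\bigr)$. To certify $d(v_1',v_d)\le d-1$ from this upper bound you would need $i(v_1',v_d)$ to drop below roughly $2^{d-3}(2g-1)$, and a ``bounded reduction'' of the intersection number gives no such control: curves at distance $d-1$ can have arbitrarily large intersection number, so after your surgery the IGI's upper bound on $d(v_1',v_d)$ typically sits far above $d-1$. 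In the \cite{BMM} framework you are imitating, the legitimacy of a surgery is established combinatorially, not metrically: the box and hexagon surgeries produce a curve disjoint from \emph{both} neighbors $v_0$ and $v_2$, so that $v_0,v_1',v_2,\dots,v_d$ is still a path of length $d$ (hence a geodesic) whose complexity $\sum_k\bigl(i(v_0,v_k)+i(v_k,v_d)\bigr)$ strictly decreases. You never verify disjointness from $v_2$ (your checks (i)--(ii) concern only $\alpha_0$ and $\alpha_d$), and without it even knowing $d(v_1',v_d)=d-1$ would not produce a competing geodesic of smaller complexity, since a fresh geodesic from $v_1'$ to $v_d$ has uncontrolled intermediate vertices entering the complexity sum.

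The paper's actual mechanism is essentially the contrapositive of yours, with the IGI used in the opposite direction. Minimal complexity \emph{forbids} the surgery polygons; this forces each rectangle between parallel arcs of $\alpha_1\setminus\gamma$ to trap an arc of $\alpha_2$, then $\alpha_3$, and so on (the rainbow of Lemma 4.2). The count $C_\sharp=6g-8$ is the number of parallel classes of arcs in the surface split along $\alpha_0$, computed from a one-vertex triangulation---not your ``complementary regions of $S\setminus(\alpha_0\cup\alpha_d)$ trimmed from $6g-6$'' bookkeeping; in particular $6g-8=4$ still applies at $g=2$, and what is special in genus two is only the IGI base case $2^{d-4}\cdot 12$ from the MICC computation, not a degenerate region count requiring enumeration. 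Pigeonhole turns $|\alpha_1\cap\gamma|>(k-1)C_\sharp$ into a $k$-stack rainbow; the auxiliary loop $\alpha_1'$ (an outermost rainbow arc joined to a subarc of $\gamma$, used only to bound a distance from below and never reinserted into the geodesic) satisfies $d(\alpha_1',\alpha_{k+1})\ge k$, so the IGI lower bound forces $|\gamma\cap\alpha_{k+1}|\ge 2^{k-3}(2g-1)$, hence a new rainbow, and---after the $S(k)$ pattern analysis correcting for the index drift ($a_8$ versus $a_{10}$ between consecutive $a_9$'s)---the propagation never terminates, contradicting finiteness of $d$. The constants $15$ and $44$ come from solving explicit inequalities such as $2^{(2p-1)-3}(2g-1)/(6g-8)>3p-1$, and your proposal has no counterpart to the $S(k)$ analysis, which is where the paper does the real quantitative work; both that analysis and a valid replacement for your step (iii) are genuinely missing.
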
 

For a pair of curves, $(\a, \b)$, associated with a pair of vertices, $(v,w)$, we again assume $\a$ and $\b$ are in minimal position.  Define $ i(v,w) :=|\a \cap \b|$.  The {\em{complexity}} of a path, $v=v_0, v_1, \cdots, v_{\text{d}}=w$ in $\mathcal{C}(S_g)$ is then defined to be the number, $\sum_{k=1}^{{\text{d}}-1} (i(v_0, v_k) + i(v_k, v_{\text{d}}))$.
Then, among the geodesics between two vertices, we have the following inclusion relation,
$$\{ \text{geodesics of minimal complexity} \} \subset \{ \text{ISEG} \} \subset \{ \text{IEG}\}, $$
where the ISEG stands for initially super efficient geodesics and the IEG stands for initially efficient geodesics.

Now building on Theorem \ref{theorem: super efficiency} we obtain a new bound on the size of this explicit list of the candidates for $v_1$ vertex in the initially super efficient geodesics that is independent of distance and only dependent on genus.  In particular, we have the following corollary.

\begin{corollary}[Super efficiency]
\label{corollary: candidates for v1}
If $v = v_0,v_1, \cdots,v_\text{d} = w$ is an (initially) super efficient geodesic of $\mathcal{C}(S_g)$ with $d(v,w) = {\text{d}} \geq 3$, then, for $g>2$, there is an explicitly computable list of at most 
$$ {[15 \cdot (6g - 8) + 1]}^{6g -6}$$
vertices $v_1$ that can appear as the first vertex on an (initially) super efficient geodesic.  When $g=2$ the bound on this list is $ 45^6$.
\end{corollary}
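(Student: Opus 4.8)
The plan is to transplant the counting scheme of Birman--Margalit--Menasco \cite{BMM} and feed it the sharper, distance-independent intersection bound supplied by Theorem~\ref{theorem: super efficiency}. First I would recall the combinatorial setup from \cite{BMM}: since $d(v,w) \geq 3$, the representatives $\alpha_0$ and $\alpha_{\text{d}}$ fill $S_g$, so $\alpha_0 \cup \alpha_{\text{d}}$ induces a cell decomposition of the surface, and from this decomposition one extracts an explicitly computable complete system of reference arcs $\gamma_1, \ldots, \gamma_{6g-6}$ for the pair $(\alpha_0, \alpha_{\text{d}})$. The number $6g-6 = -3\chi(S_g)$ of arcs is exactly what is needed so that the vector of geometric intersection numbers $(|\alpha_1 \cap \gamma_1|, \ldots, |\alpha_1 \cap \gamma_{6g-6}|)$ determines the isotopy class of $\alpha_1$; this injectivity is the content of the relevant lemma in \cite{BMM}, and since a super efficient geodesic is in particular efficient, the same determination applies verbatim here.

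With this in hand, the argument reduces to a direct counting estimate. Each $\gamma_j$ is a reference arc for $(\alpha_0, \alpha_{\text{d}})$, so Theorem~\ref{theorem: super efficiency} applies to every coordinate simultaneously, giving
\[
|\alpha_1 \cap \gamma_j| \leq \min\bigl({\text{d}}-1,\ 15 \cdot (6g-8)\bigr)\ (g \geq 3), \qquad |\alpha_1 \cap \gamma_j| \leq \min\bigl({\text{d}}-1,\ 44\bigr)\ (g=2).
\]
Hence for $g > 2$ each of the $6g-6$ coordinates takes a value in $\{0, 1, \ldots, 15 \cdot (6g-8)\}$, a set of size $15 \cdot (6g-8) + 1$, so the number of admissible intersection vectors --- and therefore the number of candidates for $v_1$ --- is at most $[15 \cdot (6g-8)+1]^{6g-6}$. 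For $g=2$ we have $6g-6 = 6$ and each coordinate lies in a set of size $45$, yielding the bound $45^6$. Explicit computability is inherited from \cite{BMM}, since both the reference-arc system and the finite enumeration of intersection vectors are algorithmic.

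The step I expect to require the most care is verifying that the distance-independent bound of Theorem~\ref{theorem: super efficiency} may legitimately be applied to the \emph{entire} complete system $\{\gamma_j\}$ at once, rather than to a single reference arc. This amounts to checking that every arc produced by the \cite{BMM} cell decomposition genuinely satisfies the hypotheses of a reference arc for $(\alpha_0, \alpha_{\text{d}})$ --- that is, each $\gamma_j$ is in minimal position with $\alpha_1$ and has interior disjoint from $\alpha_0 \cup \alpha_{\text{d}}$ --- so that the uniform super-efficiency estimate is available coordinate by coordinate. Once this compatibility is confirmed, the corollary follows immediately by replacing the factor ${\text{d}}$ in the \cite{BMM} count with the genus-only factor coming from Theorem~\ref{theorem: super efficiency}.
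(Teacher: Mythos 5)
Your proposal is correct and takes essentially the same route as the paper: both extract the complete system of $6g-6$ reference arcs from the polygon decomposition associated to the filling pair (rectangles and hexagons, following \cite{BMM}), use that the intersection numbers of $\alpha_1$ with these arcs determine $\alpha_1$ up to isotopy, and then count at most $[15\cdot(6g-8)+1]^{6g-6}$ (resp.\ $45^{6}$ for $g=2$) admissible intersection vectors via Theorem~\ref{theorem: super efficiency}. The compatibility point you flag --- that each arc of the system is a genuine reference arc for $(\alpha_0,\alpha_{\text{d}})$ --- is exactly what the paper's construction provides, so there is no gap.
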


\subsection{Test for a geodesic path}
The new idea advancing our understanding of efficient geodesics is the application of a simple necessary condition for when a given vertex sequence is a geodesic.
Specially, for a pair of natural numbers, $(\text{d} , g) \in \N^2$ with $\text{d} \geq 3$ and $g \geq 2$, define the {\em minimal intersection function}, $\mathcal{I}(\text{d},g) := \min \{| \a \cup \b | : \a, \b \subset S_g \ {\rm and} \ d(\a ,\b) = \text{d} \} $.  Currently, few exact values of $\mathcal{I}$ are known.  We do know the $\mathcal{I}(3,2) =4$ and for $g \geq 3$ we know that $\mathcal{I}(3,g) = 2g-1$ \cite{AH}.  (Recall, pairs of curves at distance $3$ or greater are called {\em filling pairs}.)  Additional, we know that $\mathcal{I}(2,4) = 12$ \cite{GMMM}.  

Now, let $A = (\a_i)_{i \in I}$ we a sequence of essential simple curves in $S_g$ representing the vertices of $\Gamma$ such that $|\a_i \cap \a_{i+1}| = 0$ for all $i , i+1 \in I$.  If $\Gamma$ is a geodesic, by definition $d( \g_i \g_j) = |i -j|$.  But, this implies that $|\a_i \cap \a_j| \geq \mathcal{I}(|i-j|,g)$.  This observation gives us a {\em necessary test} for $\Gamma$ (and $A$) being a geodesic sequence.  Additionally, we can expand on this necessary test by utilizing the triangle inequality.  Specially, for any curve $\delta \subset S_g$ we cannot have
\begin{equation}
\label{equation: delta}
(|\a_i \cap \a_j | \leq ) \ |\a_i \cap \delta | + | \a_j \cap \delta | < \mathcal{I}(|i-j| , g).
\end{equation} 

The argument in \cite{BMM} for the existence of efficient geodesics comes from the application of ``path surgery'' operations which replaces a geodesic $\Gamma$ with a new geodesic path, $\Gamma^\prime$, while strictly decreasing the intersections of representative curves with reference arcs.  In the proof the fact that the new sequence, $\Gamma^\prime$, is a geodesic only comes from it being the same length as $\Gamma$.  Our strategy for proving Theorem \ref{theorem: super efficiency} is to show that if the intersection of $\a_1$ with a reference arc is too large then, in order that there not be a curve $\delta$ satisfying equation \ref{equation: delta}, our geodesic path is of infinite length.
To implement this strategy we need a tight lower bound on the growth of $\mathcal{I}(\text{d}, g)$ as a function of $\text{d}$.  Our bound will be given by  the {\em intersection growth inequality} or {\em IGI}.

\begin{theorem}[IGI] \label{theorem: intersection growth}
Let two curves $\a, \b \subset S_g $ represent two vertices $v , w \in \mathcal{C} (S_{g })$ that realize their intersection number.
If $g=2$ and $d(v , w) = {\text{d}} \geq 4$ then
$$2^{{\text{d}} -4} \cdot 12 \leq |\a \cap \b|.$$  In particular, $2^{{\text{d}} -4} \cdot 12 \leq \mathcal{I}(\text{d} , 2)$.
If $g>2$ and $d(v , w) = {\text{d}} \geq 3$ then $$2^{{\text{d}} -3} \cdot (2g -1) \leq |\a \cap \b|,$$
that is,
$$d \leq 2 + \log_2\left(\frac{|\a \cap \b|}{g - 0.5}\right).$$
In particular, $2^{{\text{d}} -3} \cdot (2g -1) \leq \mathcal{I}(\text{d} , g)$ for $g \geq 3$.
\end{theorem}

\subsection{An aside---other lower bounds for $\mathcal{I}(\text{d},g)$}
\label{subsection: estimate}
The IGI takes into account of the genus of surface improving the following estimate given by Hempel \cite{Hempel}:
\begin{equation}
\label{equation: Hempel inequaltiy}
    d(\alpha, \beta) \leq 2 + 2 \cdot   \log_2(|\a \cap \b|)
\end{equation} 

Another estimate utilizes the topology of a surface was proved by Bowditch (Corollary 2.2 in \cite{BB}) and reformulated by Aougab, Patel and Taylor \cite{APT}. 

\begin{theorem}
For any two curves $\a$ and $\b$ on an oriented surface $S_{g,p}$ with Euler characteristic $|\chi(S_{g,p})| \geq 5$ and $|\a \cap \b| > 0$, 
\begin{equation}
\label{equation: Bowditchv inequality}
    d(\alpha, \beta) < 2 + 2 \cdot  \frac{ \log_2(|\a \cap \b|/2)}{\log_2((|\chi{(S_{g,p})|-2})/2)}.
\end{equation}
\end{theorem}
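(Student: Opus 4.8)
The plan is to prove the stated bound in its equivalent form, as a \emph{lower} bound on the intersection number,
\[
|\a \cap \b| \;>\; 2\left(\tfrac{|\chi(S_{g,p})| - 2}{2}\right)^{(d-2)/2},
\]
by constructing an explicit path in $\C(S_{g,p})$ from $\a$ to $\b$ whose length is controlled by $\log_2 |\a \cap \b|$. First I would dispose of the degenerate cases. If $\a \cup \b$ does not fill $S_{g,p}$, then the complement contains an essential curve disjoint from both, so $d(\a,\b) \le 2$ and the inequality holds for free; here the hypothesis $|\chi(S_{g,p})| \ge 5$ is exactly what guarantees that the base $(|\chi(S_{g,p})|-2)/2 \ge 3/2 > 1$, so that the right-hand side of the displayed inequality genuinely grows and exceeds $2$. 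Thus I may assume $\a$ and $\b$ are in minimal position and fill, in which case every complementary region of $\a \cup \b$ is a disc, and I may also assume $d = d(\a,\b) \ge 3$.

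The engine of the proof is an iterated \emph{intersection-reduction lemma}. Its assertion is that for curves $\a, \b$ in minimal position with $n = |\a \cap \b| \ge 2$ there is a curve $\g$ at small $\C(S_{g,p})$-distance from $\a$ (distance $1$ or $2$) with
\[
|\g \cap \b| \;\le\; \frac{n}{k}, \qquad k = \frac{|\chi(S_{g,p})|-2}{2}.
\]
To produce $\g$ I would cut $S_{g,p}$ along $\a$; the curve $\b$ is thereby cut into $n$ pairwise disjoint essential arcs in the resulting surface. Since pairwise disjoint essential arcs fall into only boundedly many parallelism classes---a number linear in $|\chi(S_{g,p})|$---the pigeonhole principle produces a parallel family containing a definite fraction of the $n$ arcs, cobounding a stack of rectangles. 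Re-routing a curve along the core of this rectangular band, pushed off $\a$, yields a simple closed curve $\g$ that meets $\b$ with the multiplicatively smaller bound above: the larger $|\chi(S_{g,p})|$ is, the more parallelism classes are available, the larger the extracted family, and hence the greater the guaranteed reduction. This is where the surface topology enters, and it is what improves the Hempel-type constant $2$ to $(|\chi(S_{g,p})|-2)/2$.

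With the reduction lemma in hand, the conclusion is a geometric-series iteration. Starting from $\g_0 = \a$, repeatedly apply the lemma to obtain $\g_0, \g_1, \g_2, \ldots$ with consecutive $\C(S_{g,p})$-distance $\le 2$ and $|\g_{j} \cap \b| \le n/k^{j}$. After $m = \lceil \log_k(n/2) \rceil$ steps we reach a curve $\g_m$ with $|\g_m \cap \b| \le 1$; two curves meeting at most once are disjoint from a common curve (for $i=1$, the boundary of the one-holed torus they fill), so $d(\g_m, \b) \le 2$. Adding up the distances gives
\[
d(\a,\b) \;\le\; 2m + 2 \;=\; 2 + 2\,\frac{\log_2(n/2)}{\log_2 k},
\]
which is the claimed estimate, the strict inequality coming from the rounding in $m$ together with $n \ge 2$. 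The main obstacle is precisely the sharpness of the constant: a crude count of arc parallelism classes yields a bound of the correct shape but with a weaker constant of the form $c\,|\chi(S_{g,p})|$, and obtaining Bowditch's exact factor $(|\chi(S_{g,p})|-2)/2$ requires careful bookkeeping of the complementary-disc count of $\a \cup \b$---via the Euler-characteristic identity $F = n - |\chi(S_{g,p})|$ for the number of complementary discs---rather than the naive pigeonhole estimate. Controlling that count, and ensuring the rerouted curve $\g$ stays essential and within distance $2$ of $\a$, is the technical heart of the argument.
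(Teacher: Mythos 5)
You should know at the outset that the paper contains no proof of this statement: it is quoted verbatim from Bowditch (Corollary~2.2 of \cite{BB}) as reformulated by Aougab--Patel--Taylor \cite{APT}, so the comparison is really with Bowditch's argument, which proceeds via the singular Euclidean square structure dual to a filling pair and an interpolating family of curves of controlled slope, not via a single surgery step. Your top-level scheme---iterated Hempel-style reduction, distance at most $2$ per step, reduction factor $k=(|\chi(S_{g,p})|-2)/2$, then a geometric-series summation---has the right shape and would deliver the bound if your reduction lemma were available. It is the reduction lemma that fails as sketched.

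Concretely: cutting along $\alpha$ puts the $n$ arcs of $\beta$ into at most $3|\chi(S_{g,p})|$ parallelism classes, so the pigeonhole principle yields a parallel family of $m \geq n/(3|\chi(S_{g,p})|)$ arcs; note that, contrary to your sentence, a larger $|\chi(S_{g,p})|$ means \emph{more} classes and hence a \emph{smaller} guaranteed family. More importantly, a curve $\gamma$ obtained by rerouting along the core of this band and closing up along $\alpha$ only has controlled intersection with $\beta$ away from the band: at best you get the additive bound $i(\gamma,\beta) \leq n - m = n\bigl(1 - \tfrac{1}{3|\chi(S_{g,p})|}\bigr)$, not the multiplicative bound $i(\gamma,\beta) \leq n/k$, and iterating an additive saving of this size gives $d = O(|\chi(S_{g,p})|\log n)$, far from the claim. (Alternatively, closing one band arc along the $\beta$-free side of an adjacent rectangle gives $i(\gamma,\beta)\leq 2$ and $i(\gamma,\alpha)\leq 1$---but if such a $\gamma$ were always essential, every filling pair with $n > 3|\chi(S_{g,p})|$ would satisfy $d(\alpha,\beta)\leq 4$, which is absurd; so essentiality of the surgered curve is the crux and cannot be deferred as bookkeeping.) A mechanism that can work is Hempel's: use arcs of $\beta$ to cut $\alpha$ into blocks with balanced numbers of intersection points and close each block with a $\beta$-arc; the constant $(|\chi(S_{g,p})|-2)/2$ must then come from producing roughly $k$ disjoint splitting arcs and proving at least one of the $k$ surgered pieces essential---none of which appears in your sketch. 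Two smaller defects: your summation with $m=\lceil\log_k(n/2)\rceil$ gives $d \leq 2\lceil\log_k(n/2)\rceil + 2$, which \emph{exceeds} the claimed bound when the ceiling rounds up (the rounding goes the wrong way for your strictness claim; the fix is an induction on $n$ in which one step of cost $2$ reduces $\log_k(n/2)$ by exactly $1$). And your assertion that the non-filling case ``holds for free'' is false for $n\in\{1,2\}$: there the right-hand side is at most $2$ while $d\geq 2$ since the curves intersect, so the displayed strict inequality itself fails; any honest proof must treat small $n$ under a non-strict or filling-only reading of the statement.
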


 From the asymptotic perspective, a much stronger result was provided by Aougab (Theorem 1.2 in \cite{Aougab}) to show the uniform hyperbolicity of curve graphs. 

\begin{theorem}
\label{theorem: Aougab inequality}
For each $\lambda \in (0,1)$, there is some $N = N(\lambda) \in \mathbb{N}$ such that if $\a, \b \in \mathcal{C}_0(S_{g,p})$, whenever $\xi(S_{g,p}) > N$ and $d_{\mathcal{C}}(\a, \b) \geq k$,
\begin{equation}
    \label{equation: Aougab inequality}
|\alpha \cap \beta| \geq \left(\frac{\xi(S_{g,p})^\lambda}{f(\xi(S_{g,p}))}\right)^{k-2},
\end{equation}
where $f(\xi) = O(\log_2(\xi))$.
\end{theorem}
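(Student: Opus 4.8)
My plan is to derive the bound by the classical surgery-and-induction scheme that underlies the Hempel and Bowditch estimates \eqref{equation: Hempel inequaltiy} and \eqref{equation: Bowditchv inequality}, but with a reduction factor that grows with the complexity $\xi = \xi(S_{g,p})$ rather than staying bounded. Recasting the conclusion, \eqref{equation: Aougab inequality} is equivalent to the distance upper bound $d_{\mathcal{C}}(\alpha,\beta) \le 2 + \log_{\xi^{\lambda}/f(\xi)} i(\alpha,\beta)$, so it suffices to produce a single surgery step that lowers curve-graph distance by one unit while dividing the intersection number by a definite factor. Concretely, I would prove a \emph{reduction lemma}: for essential curves $\alpha,\beta$ with $n = i(\alpha,\beta)$ large on a surface with $\xi > N(\lambda)$, there is a curve $\gamma$ disjoint from $\alpha$ (so $d_{\mathcal{C}}(\alpha,\gamma) \le 1$) with $i(\gamma,\beta) \le n\, f(\xi)/\xi^{\lambda}$. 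Iterating the lemma until the intersection number falls below the fixed threshold at which two curves can no longer fill (hence are within distance $2$) takes $m \approx \log_{\xi^{\lambda}/f(\xi)} n$ steps; since each step moves one unit in $\mathcal{C}(S)$ this gives $d_{\mathcal{C}}(\alpha,\beta) \le m + 2$, which rearranges into the stated lower bound with exponent $k-2$.

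For the reduction lemma I would cut $\beta$ along $\alpha$, producing $n$ properly embedded essential arcs in the cut surface $S' = S\setminus\alpha$ whose $2n$ endpoints lie on the two boundary copies of $\alpha$. The goal is a curve $\gamma \subset S'$ — automatically disjoint from $\alpha$ — that crosses these arcs at most $n\, f(\xi)/\xi^{\lambda}$ times. The gain of a $\xi^{\lambda}$ factor is extracted by distributing the arcs across the roughly $\xi$ pieces of a pants (or finer subsurface) decomposition and applying a pigeonhole/averaging argument: on a surface of large complexity the $n$ arcs are carried by a controlled number of isotopy types, so one can splice a bounded family of sub-arcs together with a boundary-parallel segment of $\alpha$ into a short essential curve meeting $\beta$ proportionally less often. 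Controlling the admissible number of arc types, and hence the precise reduction factor, is exactly where the correction $f(\xi) = O(\log_2 \xi)$ enters, and where the threshold $N(\lambda)$ is required so that the ratio $\xi^{\lambda}/f(\xi)$ genuinely exceeds $1$ and the induction can even get started.

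The main obstacle is the reduction lemma itself, and specifically making the $\xi^{\lambda}$ saving uniform: a naive surgery only gains a constant or square-root factor, as reflected in Hempel's $d \le 2 + 2\log_2 i(\alpha,\beta)$, whereas obtaining base $\xi^{\lambda}$ forces one to genuinely exploit the high-complexity geometry of $S$. I expect the cleanest route is to run the surgery through the \emph{unicorn} (one-corner) arcs between the arc systems dual to $\alpha$ and $\beta$, whose family, ordered by the corner point, forms a reparametrised quasi-geodesic in the arc graph with constants independent of $S$; the exponential growth of intersection numbers then follows by feeding the quasi-geodesic estimate into a subsurface-projection bookkeeping that tracks all $\xi$ coordinates simultaneously. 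Converting the resulting arc-graph statement back to the curve graph, and absorbing the logarithmic loss into $f(\xi)$, is the delicate remaining step; once it is in place the induction and the final rearrangement into the form $i(\alpha,\beta) \ge (\xi^{\lambda}/f(\xi))^{k-2}$ are routine.
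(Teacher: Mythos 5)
A preliminary remark: the paper contains no proof of this statement for you to be compared against --- it is quoted as background, being Theorem 1.2 of Aougab \cite{Aougab}, and the authors' own quantitative input (the IGI, Theorem \ref{theorem: intersection growth}) is a different inequality, with complexity-independent base $2$, proved by a linear integer programming argument on a $6$-gon decomposition. So your proposal has to be judged against Aougab's argument itself. Your outer skeleton --- recast \eqref{equation: Aougab inequality} as a distance upper bound, prove a one-step reduction lemma, induct along a geodesic, and close the base case using the fact that the filling threshold is linear in $\xi$ and hence eventually dominates $\xi^{\lambda}/f(\xi)$ since $\lambda < 1$ (this is exactly where $N(\lambda)$ enters) --- is the right shape, and is indeed how bounds of this type are organized.

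The genuine gap is that the reduction lemma \emph{is} the theorem, and your sketch of it does not deliver the claimed factor. Cutting $\beta$ along $\alpha$ distributes the $n$ arcs into $O(\xi)$ parallelism classes, and averaging shows a typical class carries weight about $n/\xi$; but an essential curve $\gamma \subset S\setminus\alpha$ must cross \emph{some} collection of classes, and nothing in your pigeonhole argument controls which classes it crosses or how many --- crossing even one heavy class already costs up to $n$ intersections, so no saving follows. What is needed, and what you defer, is the construction of a curve, essential in $S$, that crosses only a logarithmic number of classes, each of controlled weight (for instance via a short cycle in the dual graph of the arc decomposition: a graph on $O(\xi)$ vertices of minimal degree $3$ has girth $O(\log_2 \xi)$, which is the sort of mechanism that produces the correction $f(\xi)=O(\log_2 \xi)$). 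Your fallback route through unicorn arcs points in the wrong direction: unicorn paths are reparametrised quasi-geodesics with constants \emph{uniform} in the surface, so any induction built on them can only yield a base bounded independently of $\xi$ --- that is, a Hempel-type estimate \eqref{equation: Hempel inequaltiy} --- and cannot produce the complexity-dependent base $\xi^{\lambda}/f(\xi)$, which must come from counting that genuinely uses the $O(\xi)$ arc classes. As written, the proposal identifies the correct induction but leaves the theorem's actual content unproved.
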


For a closed oriented surface $S_{g \geq 4}$, Bowditch's estimate becomes 
$$d(\a, \b) < 2 + 2 \cdot  \frac{ \log_2(|\a \cap \b|/2)}{\log_2(g-2)}.$$ 
If $2 \leq g \leq 6$, then the upper bound in the
IGI is smaller. Otherwise, Bowditch's estimate is smaller as long as the intersection number of $\a \cap \b$ is sufficiently large. The bound in Theorem \ref{theorem: super efficiency} is obtained with the IGI. 

\subsection{Improving distance algorithm}
\label{algorithm}
We extend the discussion begun in the introduction of \cite{BMM} comparing the bounds on the size of the set of $v_1$ candidates
given by the use of super efficient geodesics and the bounds by the use of tight geodesics.  From that discussion we have the bound results
coming from the work of Webb \cite{Webb} which gives a bound of $2^{(72g+12) {\text min}\{{\text{d}} - 2,21\} }(2^{6g - 6} -1)$.  In particular, when $ {\text{d}} -2 \geq 21 $ and $g = 2$
we get a bound that is $ \sim~10^{75}$.  Whereas, a super efficiency bound for $g=2$ is $\sim~10^{10}$ and independent of distance.  Although an improvement, still such large bounds do not give a suitable understanding in how super efficient geodesics might be utilized in the implementation of an algorithm for computing distance. 

In \cite{BellWebb} Bell and Webb describe such a polynomial time algorithm based on the Masur-Minsky's tight geodesics technology for computing distance in the curve complex.  A key feature of their description is the ``train-track-type'' behavior of curves representing the vertices of a tight geodesic.  
A distance algorithm utilizing super efficient geodesics will exhibit analogous behavior in terms of ``parallel arcs'', but in order to more fully expand the discussion in this direction it is helpful to have the reader understand the details of the proof of Theorem \ref{theorem: super efficiency}.  We thus postpone this discussion until \S \ref{subsection: algorithm}.

\subsection{Outline of paper}
The paper is organized as follows. In \S \ref{section: Intersection}, we utilize the intersection number of a minimal filling pair as a lower bound and linear integer programming to establish the intersection growth inequality (IGI). 
In \S \ref{section: One-vertex triangulation and parallel arcs}, we use the one-vertex triangulation of surface to show that some parallel arcs occur as the intersection number is sufficiently large. \S \ref{section: efficient geodesics} reviews the techniques of efficient geodesics and curve surgery from \cite{BMM}.  In \S \ref{section: Obstruction to surgeries}, we describe a ``triangular shape'' dot graph will occur in the worst scenario and its corresponding parallel arcs on the surface. 
\S \ref{section: rainbow} defines these parallel arcs as a \emph{rainbow} and expands the dot graph machinery of \cite{BMM} by investigating the pattern hided in the dot graph that arises as a rainbow, which allows us to calculate the minimal intersection number that will create another rainbow for other curves. We complete the proof of Theorem \ref{theorem: super efficiency} by applying the IGI to the pattern of dot graph in the last \S \ref{section: super efficiency}. Then the super efficiency of Corollary \ref{corollary: candidates for v1} follows immediately.  Finally, we revisit the discussion of how our results could be utilized in the implementation of a distance computing algorithm in \S \ref{subsection: algorithm}.

\section*{Acknowledgement}
The second author wishes to thank Dan Margalit for numerous conversations he had with him during the development of this project.  Similarly, the second author thanks Joan Birman for numerous discussions on the curve complex and efficient geodesics.  We also wish to thank Kasra Rafi for alerting us to Brian Bowditch's growth rate result in \cite{BB}, and thank Tarik Aougab for letting us know his asymptotic growth rate in Theorem \ref{theorem: Aougab inequality} in \cite{Aougab}.  Finally, we would like to thank an anonymous reviewer for pointing out an error in our original statement of Theorem \ref{theorem: super efficiency} and many other helpful comments that improved the exposition.

\section{Establishing the IGI}
\label{section: Intersection}

The idea of our proof for the IGI is to utilize the {\em linear integer program} (LIP) approach that was developed in the work
of Glenn, Morrel, Morris and the second author \cite{GMMM} to establish the following result.

\begin{theorem} \label{theorem: DWH-G2-D4-I12}
The minimal intersection number for a filling pair, $\a, \b \subset S_2$, representing
vertices $v, w \in \mathcal{C}(S_2)$, respectively, with $d(v,w)=4$ is $12$.
\end{theorem}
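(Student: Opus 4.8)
The plan is to prove that the minimal intersection number for a filling pair on $S_2$ at distance exactly $4$ equals $12$ by reducing the problem to a finite combinatorial optimization that can be encoded and solved as a linear integer program. The key point is that distance and intersection number are quasi-isometrically linked, so that the ``distance $=4$'' condition translates into verifiable combinatorial constraints on the intersection pattern of the two curves.

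\begin{proof}[Proof sketch of Theorem \ref{theorem: DWH-G2-D4-I12}]
First I would establish the lower bound $|\alpha \cap \beta| \geq 12$. Using Hempel's inequality (\ref{equation: Hempel inequaltiy}) in the form $d \leq 2 + 2 \log_2(|\alpha \cap \beta|)$, the condition $d(v,w) = 4$ forces $4 \leq 2 + 2 \log_2(|\alpha\cap\beta|)$, hence $|\alpha\cap\beta| \geq 2$, which is far too weak. So the real work is a sharper lower bound. The plan is to set up the problem as a linear integer program following \cite{GMMM}: encode a filling pair $(\alpha,\beta)$ on $S_2$ combinatorially via the cell structure that $\alpha \cup \beta$ imposes on $S_2$, with the complementary regions all being disks (the filling condition). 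The integer variables count intersection points and the numbers of complementary polygons of each type; the constraints come from the Euler characteristic equation $V - E + F = \chi(S_2) = -2$, from each complementary disk being a polygon with at least four sides (no bigons or monogons, since $\alpha,\beta$ are in minimal position and essential), and crucially from the requirement that $d(v,w) = 4$ rather than $\leq 3$. One then minimizes the objective $|\alpha \cap \beta| = V$ subject to these constraints.

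The hardest step, and the one I expect to be the main obstacle, is faithfully encoding the distance lower bound $d(v,w) \geq 4$ as linear constraints. Forbidding $d \leq 1$ (disjointness) and $d \leq 2$ (a common disjoint curve) is straightforward to phrase combinatorially, but ruling out $d = 3$ is delicate: one must ensure no curve $\delta$ exists that is disjoint from $\alpha$ and at distance $\leq 2$ from $\beta$. My plan would be to handle this the way \cite{GMMM} does---by translating the existence of a short connecting path into the existence of certain arcs or subsurfaces in the complement, which can be counted and then excluded via additional inequalities on the LIP variables. The surface $S_2$ is small enough that the resulting integer program has a bounded, enumerable feasible region, so that a machine-assisted LIP solver can certify that the minimum of the objective is $12$.

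Finally I would exhibit an explicit filling pair realizing $|\alpha \cap \beta| = 12$ with $d(v,w) = 4$ to show the bound is sharp; this is the matching upper bound and completes the equality. The existence of such a pair is expected to come directly from an optimal feasible solution of the LIP, which not only gives the numerical minimum but also reconstructs the combinatorial intersection pattern, from which an honest pair of curves on $S_2$ can be built and its distance verified independently by checking that no distance-$3$ path exists. I would then note that this value $12$ is precisely the base constant appearing in the $g=2$ case of the IGI (Theorem \ref{theorem: intersection growth}), which is how this result feeds into the doubling growth estimate $2^{d-4}\cdot 12 \leq |\alpha\cap\beta|$.
\end{proof}
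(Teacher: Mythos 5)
There is a genuine gap, and it sits exactly at the step you yourself flag as the hardest one: faithfully encoding $d(v,w)\geq 4$ as linear constraints. The paper's own treatment shows why this cannot be finessed ``the way \cite{GMMM} does.'' Its LIP (following \cite{GMMM}) splits $S_2$ along $\alpha$, takes a maximal collection of non-parallel essential arcs in $S_{1,2}$ with weights $w_1,\dots,w_6$ recording parallel families of $\beta$-arcs, and imposes, for each circuit $c$ in the dual graph, the inequality $\sum_{c} w_i \geq 4$ --- the point being that $|c\cap\beta|\geq 4$ is \emph{necessary} for $(c,\beta)$ to fill, which in turn is necessary for $d(c,\beta)\geq 3$ when $d(\alpha,\beta)=4$. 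These constraints are only necessary conditions, and the resulting minimum is $8$, not $12$: the optimum $w_1=w_2=w_3=w_5=2$, $w_4=w_6=0$ does not come from any genuine distance-$4$ pair. The condition ``no curve disjoint from $\alpha$ lies within distance $2$ of $\beta$'' quantifies over all such curves and is not captured by lower bounds on weighted intersection counts; no linear encoding of it is known, and \cite{GMMM} does not provide one. Consequently your plan that ``a machine-assisted LIP solver can certify that the minimum of the objective is $12$'' cannot work as stated: the faithful LIP certifies only $8$, leaving a gap from $8$ to $12$ that your proposal has no mechanism to close. The paper closes it by an entirely different means: an exhaustive computer search for distance-$4$ filling pairs using the MICC program \cite{MICC} (an implementation of the efficient geodesic distance algorithm), with the LIP bound of $8$ serving merely to floor the search space. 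The LIP's real role in the paper is to extract the \emph{scaling factor} $2$, which combines with the searched value $12$ to give the doubling inequality $2^{d-4}\cdot 12\leq|\alpha\cap\beta|$.

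A secondary but related flaw: your proposed upper-bound step reconstructs the extremal pair ``directly from an optimal feasible solution of the LIP,'' but since the LIP optimum is $8$ and its optimal solutions correspond to pairs of distance at most $3$, this reconstruction would not produce a distance-$4$ example. Exhibiting an explicit $12$-intersection filling pair and independently certifying $d=4$ (as MICC does, via the efficient geodesic algorithm) is the right idea, but the example must come from the search, not from the LIP. If you want a route that avoids brute-force enumeration, you would need genuinely new constraints --- e.g.\ certifying $d\geq 4$ by showing every curve disjoint from $\alpha$ fills with $\beta$ --- and that verification is itself the computational content of the MICC search, not a linear inequality.
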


\begin{proof}[Proof of Theorem \ref{theorem: intersection growth}] 

First, let us recall the LIP argument. Suppose that $(\a, \b)$ is a filling pair for $S_2$, and we split $S_2$ along $\a$ and consider the resulting properly embedded arcs of $\b$
in a genus one surface having two boundary components, $S_{1,2}$.  (As in \cite{BMM}, we can reduce to the case where $\a$ in non-separating so that the surface is connected.)
Assume that $\a$ and $\b$ are arranged to being minimally intersecting up to isotopy, we can assume that all the discs of $S_{1,2} \setminus \b$ are
$2k$-gons, $k \geq 2$.  Now consider an essential curve, $ c \subset S_{1,2}$, that intersect each arc of $\b$ at most once.  We notice that
$|c \cap \b| $ has to be sufficient so that $d(c, \b) \geq d (\a, \b) - 1$.  In particular, if $d(\a, \b) = 4$, we have to have
$ |c \cap \b| \geq 4$, since $4$ is the minimal intersection number for the pair $(c , \b)$ to be filling.

\begin{figure}[htbp]
\labellist
\small\hair 2pt
\pinlabel $w_6$ at 116 100
\pinlabel $w_2$ at 107 245
\pinlabel $w_4$ at 340 123
\pinlabel $w_1$ at 73 305
\pinlabel $w_3$ at 380 305
\pinlabel $w_5$ at 225 90
\pinlabel $w_6$ at 625 93
\pinlabel $w_2$ at 619 245
\pinlabel $w_4$ at 840 123
\pinlabel $w_1$ at 585 305
\pinlabel $w_3$ at 885 305
\pinlabel $w_5$ at 735 94
\endlabellist
\centering
\includegraphics[width=0.90\textwidth]{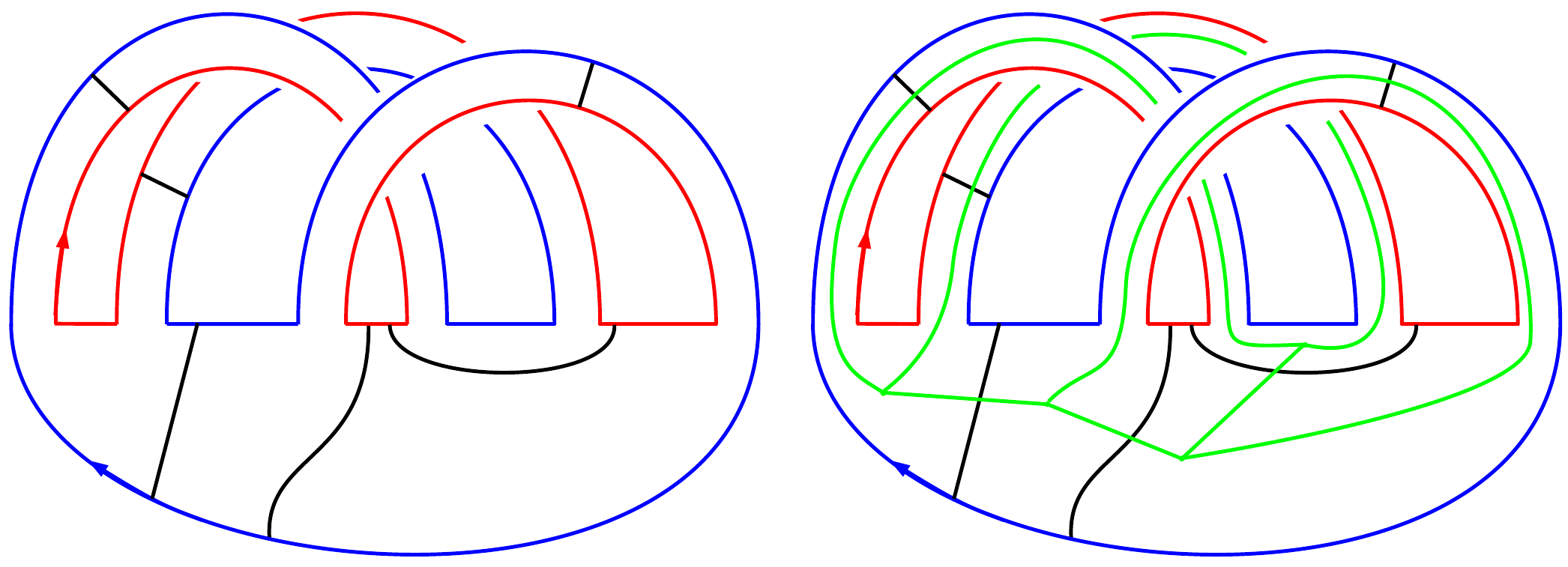}
\caption{{\small  The left illustration is a genus one surface with two boundary curves---coded red and blue.  $C$ is a maximal
collection of $6$ weighted arcs.  The weights, $w_1 , w_2 , w_3 , w_4 , w_5 , w_6$, are non-negative integers. In Theorem \ref{theorem: DWH-G2-D4-I12}, all weights can be taken to be equal to 2 for $\beta$. The green graph
in the right illustration is $G(C)$, the dual graph.  Each edge of $G(C)$ intersects exactly one arc of $C$ once.}}
\label{fig: DWH-G2-2}
\end{figure}

We can translate this above discussion into a system of linear inequalities, one for each such simple closed curve $c$.
First, in $S_{1,2}$ we can collapse all bands of $4$-gon regions to a single properly embedded arc.  This gives us a collection of non-parallel properly embedded essential arcs in $S_{1,2}$.
If need be we throw in additional essential arcs such that our collection, $C$, is a maximal collection of non-parallel properly embedded essential arcs in $S_{1,2}$.
Such a maximal collection splits $S_{1,2}$ up into four $6$-gon disc regions.  (Up to homeomorphism there are only a finite number of ways gluing together four $6$-gons
to construct $S_{1,2}$.)  The left illustration in Figure \ref{fig: DWH-G2-2} shows one possible $6$-gon decomposition of $S_{1,2}$.  To find all possible loops, $c$, of the required
type that intersect any essential arc in our collection at most once, we consider the dual graph, $G(C)$, to this decomposition, as illustrated in the right of Figure \ref{fig: DWH-G2-2}.
One of our needed curves is then just a circuit---a closed edge-path---in $G(C)$. To each such circuit we associate an inequality as follows.
 
First, assign weights, $w_1, w_2,w_3,w_4,w_5,w_6$, to each of the six essential arcs in the decomposition.  Each $w_i$ corresponds to the number of
parallel arcs of $\b$.  Next, for any circuit add together all the weights of the arcs of $C$
that the circuit intersects.  For a circuit $c \subset G(C)$, this sum is equal to $|c \cap \b|$.  Thus, by our previous discussion this sum is greater than $4$ when $d (\a, \b) =4$.
The circuits of Figure \ref{fig: DWH-G2-2} yield the following LIP system (\ref{LLP-1}).

\begin{eqnarray} \label{LLP-1}
\begin{tabular}{r c r c r c r c r c r}
$w_1$ & $+$ & $w_4$ & $+$ & $w_5$ & $+$ & $w_6$ & $\geq$ & $4$ \\
$ w_2$ & $+$ & $w_4$ & $+$ & $w_5$ & $+$ &$w_6$ & $\geq$ & $4$ \\
$\ $       & $ \ $& $\ $    & $ \ $ & $w_3$ & $+$ & $w_5$ & $\geq$ & $4$ \\
$ \ $ & $\ $ & $ \ $ & $\  $ & $w_1$ &$+$ & $w_2$ & $\geq$ & $4$ \\
$ w_1$ & $+$ & $w_3$ & $+$ & $w_4$ & $+$ & $w_6$ & $\geq$ & $4$ \\
$ w_2$ & $+$ & $w_3$ & $+$ & $w_4$ &$+$ & $w_6$ & $\geq$ & $4$ \\
$\ $       & $w_1 ,$& $w_2 , $    & $w_3 , $ & $w_4 , $ & $w_5 ,$ & $w_6$ & $\geq$ & $0$
\end{tabular}
\end{eqnarray}

When we minimize $P(w_1, \cdots , w_6) = \sum w_i $ constrained by LIP (\ref{LLP-1}), we find that the minimum value of $P$ is $8$, which is {\bf twice} $4$---the {\em scaling
factor of $P$ constrained by LIP (\ref{LLP-1})}. The minimum value of $P$ is achieved when $w_1=w_2=w_3=w_5=2$ and $w_4=w_6=0$.
(One can utilize https://www.easycalculation.com/ operations-research/simplex-method-calculator.php online.)

\begin{remark}
In order to re-glue the two boundary curves of $S_{1,2}$ so that in $S_2$ we obtain the curve $\b$, it is necessary that the sum of the weight on each boundary
curve to be equal---an added constraint to (\ref{LLP-1}) that is currently missing.  With such an added constraint, as noted in \cite{GMMM}, any other $6$-gon decomposition of $S_{1,2}$ will produce an LIP that is equivalent to LIP (\ref{LLP-1}) up to relabeling.  So the minimization
of $P$ will still result in a value of $8$.
\end{remark}

By the linearity of LIP (\ref{LLP-1}), if we replaced every occurrence of the lower bound of $4$ by $1$ and asked what would the minimal value of
$P$ so constrained, our answer would be $2$.  Now the minimal intersection for a distance $4$ filling pair as stated in Theorem \ref{theorem: DWH-G2-D4-I12}
is $12$, not $8$.  This value was obtained by doing a search for distance $4$ filling pair utilizing the MICC program \cite{MICC} that is an implementation of
the efficient geodesic algorithm and this search was simplified by starting with filling pairs having at least $8$ intersections.  Knowing this additional
fact we can conclude that any distance $5$ filling pair of $S_2$ must have intersection number at least $ 2 \times 12$.  And, in general
we have our claimed inequality $$ 2^{d-4} \cdot 12 \leq | \a \cap \b | $$ for a filling pair of distance $d$.

Finally, we consider a filling pair, $(\a , \b)$ for a higher genus $S_g$ and we split along $\a$ to produce $S_{(g-1) , 2}$, we observe that
there will always be an embedded $S_{1 ,2}$ in $S_{(g-1) , 2}$, for $g \geq 3$.  Thus, when we produce a $6$-gon decomposition for $S_{(g-1) , 2}$
coming from a complete collection of non-parallel essential properly embedded arcs, the associated LIP for the dual graph will contain (up to relabeling)
a copy of LLP (\ref{LLP-1}). It follows that the scaling factor of $P$---the sum of all the weights---is at least $2$.  This observation plus the fact
that $2g-1$ is the minimal intersection number for a filling pair yields the inequality:
$$ 2^{d - 3} \cdot (2g-1) \leq |\a \cap \b|.$$
\end{proof}

\section{One-vertex triangulation and parallel arcs}
\label{section: One-vertex triangulation and parallel arcs}
Suppose $v_0, v_1, \cdots, v_{\text{d}}$ is a geodesic of length $d \geq 3$ in $\mathcal{C}(S_g)$, and $\a_0,\a_1, \a_{\text{d}}$ are representatives of $v_0, v_1, v_{\text{d}}$ that are pairwise in minimal position. 
For any reference arc $\g$ of the pair $(\a_0, \a_d)$, that is, an arc that is in minimal position with $\a_1$ and whose interior is disjoint from $\a_0 \cup \a_d$. In fact, it suffices to look at the reference arc that connects the midpoints of $\a_0$-edges in a non-rectangular polygon of $S_g\setminus (\a_0  \cup \a_d)$. In this section, we will show the existence of parallel arcs of $\a_1 \setminus \g$ given that the intersection between $\a_1$ and $\g$ is sufficiently large. 
We start off with a calculation of one-vertex triangulation of a closed surface $S_g$. 

\begin{lemma}
 The number of edges in one-vertex triangulation of a closed oriented surface $S_g$ is $6g-3$.
\end{lemma}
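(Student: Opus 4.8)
The plan is to apply the Euler characteristic formula together with an edge--face incidence count. Let $V$, $E$, and $F$ denote the numbers of vertices, edges, and triangular faces of the one-vertex triangulation of $S_g$. By hypothesis $V = 1$, and the Euler characteristic of a closed oriented surface of genus $g$ is $\chi(S_g) = 2 - 2g$, so Euler's formula gives $V - E + F = 2 - 2g$.

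Next I would record the relation between $E$ and $F$ coming from the fact that every face is a triangle. Counting incidences between faces and their bounding edges, each of the $F$ triangles contributes three sides, while on a closed surface each edge is shared by exactly two triangles. Hence $3F = 2E$, i.e.\ $F = \tfrac{2}{3}E$. Substituting $V = 1$ and $F = \tfrac{2}{3}E$ into Euler's formula yields $1 - E + \tfrac{2}{3}E = 2 - 2g$, which simplifies to $1 - \tfrac{1}{3}E = 2 - 2g$ and hence $E = 6g - 3$, as claimed.

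The only point requiring care---and the closest thing to an obstacle---is the incidence identity $3F = 2E$. In a one-vertex triangulation every edge is a loop based at the single vertex, and a single triangle may meet a given edge along more than one of its sides, so the triangulation is not a simplicial complex in the strict sense but rather a $\Delta$-complex (CW) decomposition of $S_g$. I would therefore phrase the count in terms of \emph{edge-sides} of triangles rather than distinct edges: the $F$ triangles present exactly $3F$ edge-sides in total, and since $S_g$ is closed (has no boundary) every edge occurs as a side of exactly two of these, giving $2E$. This confirms $3F = 2E$ regardless of the loop and self-adjacency degeneracies, after which the Euler-characteristic computation goes through verbatim.
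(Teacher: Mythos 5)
Your proof is correct and follows essentially the same route as the paper's: Euler's formula $V-E+F=2-2g$ with $V=1$, combined with the incidence count $3F=2E$, solved for $E$. Your extra remark justifying $3F=2E$ via edge-sides of a $\Delta$-complex (since edges are loops and faces may self-adjoin) is a point of care the paper leaves implicit, but it does not change the argument.
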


\begin{proof}
 Consider a one-vertex triangulation of $S_g$ with number of vertices, edges and faces denoted by $V$, $E$ and $F$, respectively.
 By Euler characteristic calculation, $V-E+F=2-2g$. In the triangulation, each face has 3 edges and each edge is shared by 2 faces, then we have $2E=3F$. Since there is a unique vertex, if we multiply the Euler characteristic by 3, then 
 
\begin{equation}
\label{Euler calculation}
\begin{array}{r@{}l}
  3V-3E+3F &= 6-6g,\\
  3V-3E+2E &= 6-6g,\\
  3V-E &= 6-6g,\\
  3-E &= 6-6g,\\
  E &= 6g-3.
\end{array}
\end{equation}

\end{proof}

Now focus on the components of $\a_1 \setminus \g \subset S_g \setminus \a_0 $.  Two arc components $ c_1 , c_2 \subset \a_1 \setminus \g $
are {\em parallel} in $S_g \setminus \a_0 $ if they are two opposite sides of a rectangular disc---the other two sides being in $\g$. (See Figure~\ref{parallel arcs}.)

\begin{figure}[htbp]
\labellist
\small\hair 2pt
\pinlabel $\g$ at 0 85
\pinlabel $c_1$ at 15 120
\pinlabel $c_2$ at 48 120
\pinlabel $\g$ at 220 85
\pinlabel $c_1$ at 240 120
\pinlabel $c_2$ at 277 120
\endlabellist
\centering
\scalebox{.80}{\includegraphics[width=0.90\textwidth]{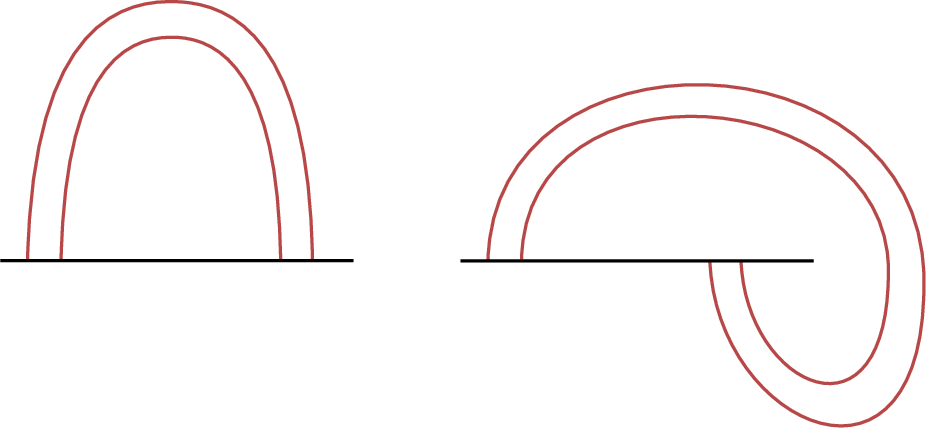}}
\caption{\small  Two configurations of parallel arcs.}
\label{parallel arcs}
\end{figure}
 
 We want to determine the minimal number $C_\sharp$ of components of $\a_1 \setminus \g$ such that once the number of components is larger than $C_\sharp$, then the parallel arcs will occur. When we split $S_g$ along $\a_0$, the reference arc $\g$ is a properly
embedded arc with its endpoints on the boundary of the resulting surface, $S_{\hat{g}}$, where $\hat{g} \leq g - 1$. 

\begin{lemma}
\label{lemma: parallel arcs}
 With the setup on a closed surface $S_g$ as above, if $|\a_1 \cap \g| >  6g-8$, then there exist parallel arcs of $\a_1 \setminus \g$. More generally, if  $|\a_1 \cap \g| >  (k-1)(6g - 8)$, then there exist at least $k$ parallel arcs of $\a_1 \setminus \g$.
\end{lemma}

\begin{proof}
 
There are three cases we need to consider:
\begin{enumerate}
  \item $\a_0$ is non-separating and two endpoints of $\g$ are on the two distinct boundary components of the connected surface $S_g \setminus \a_0$;
  \item $\a_0$ is non-separating and both endpoints of $\g$ are on the same boundary component of the connected surface $S_g \setminus \a_0$;
  \item $\a_0$ is separating and two endpoints of $\g$ is on the single boundary component of one of the two components in $S_g \setminus \a_0$. 
\end{enumerate}

\begin{figure}[htbp]
\centering
\scalebox{.80}{\includegraphics[width=0.90\textwidth]{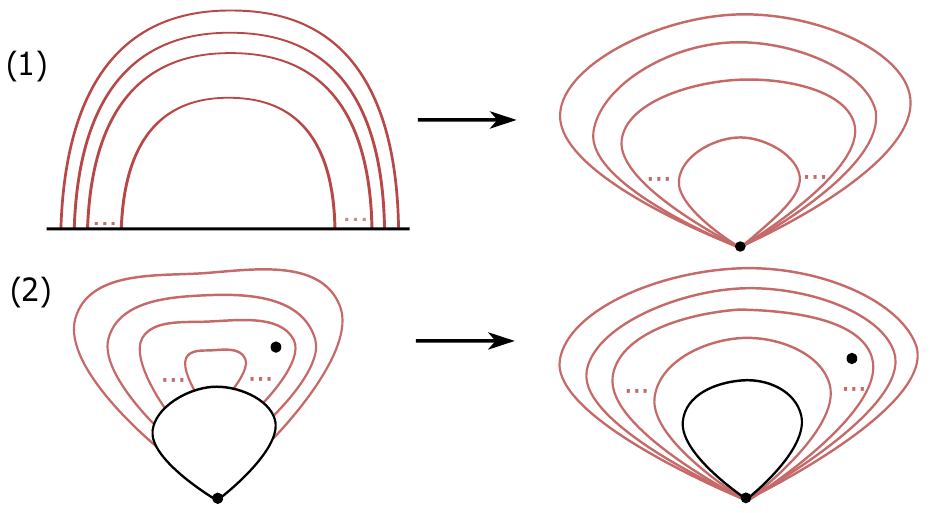}}
\caption{\small  In (1), the reference arc $\g$ in black with two endpoints on two distinct boundary components (two distinct vertices after being crushed) of $S_g \setminus \a_0$ is collapsed to a point; In (2), two boundary components of $S_g \setminus \a_0$ are crushed to two points. The two endpoints of the reference arc $\g$ in black are identified to be a single endpoint, and the intersection points with $\a_1$ are moved along $\g$ to the single endpoint.  }
\label{arc_dot1}
\end{figure}

\begin{figure}[htbp]
\centering
\scalebox{.80}{\includegraphics[width=0.90\textwidth]{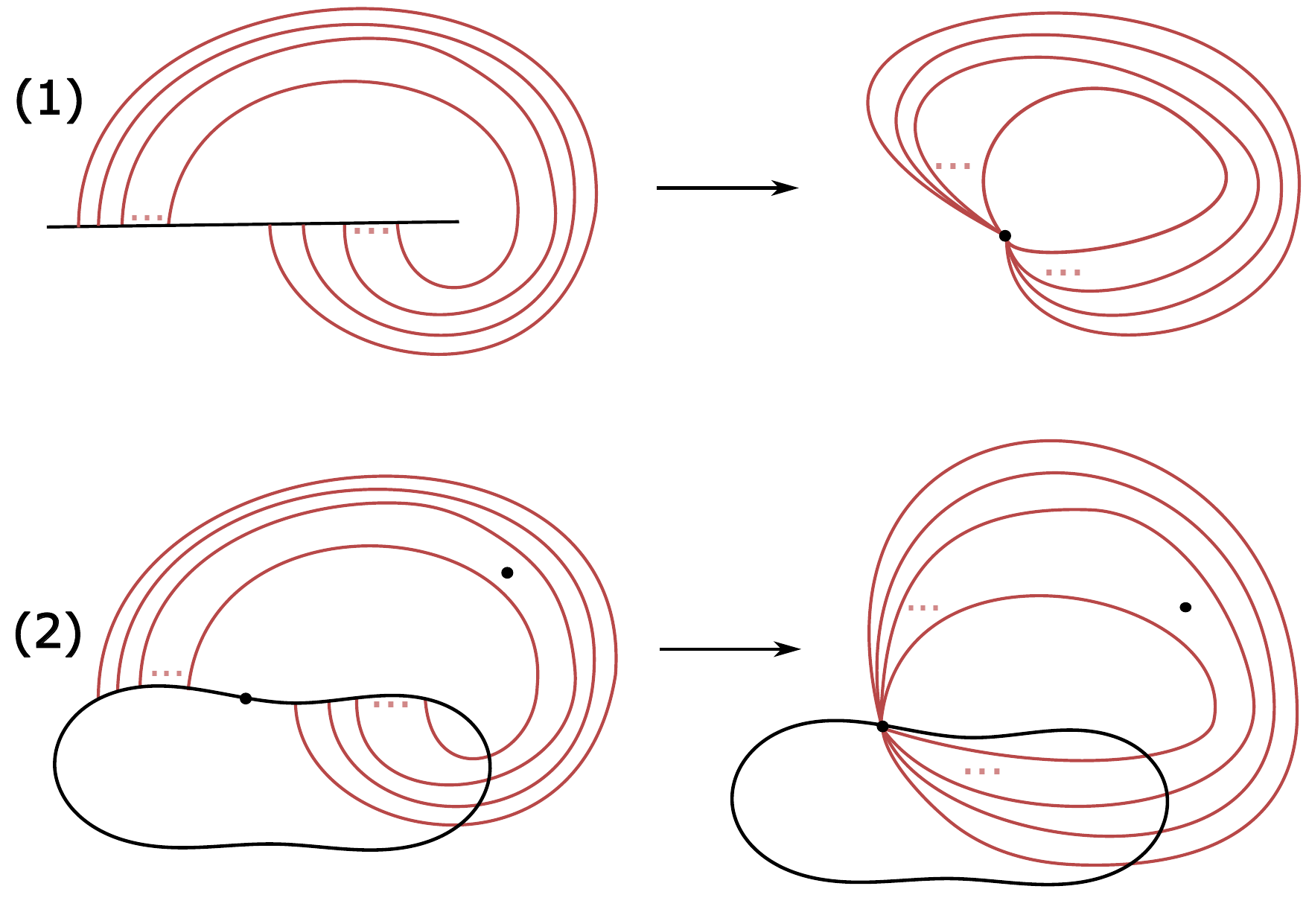}}
\caption{\small  The other configuration is similar to Figure \ref{arc_dot1}. }
\label{arc_dot2}
\end{figure}

In all cases we can think of
$\g$ as being in a surface of at least one less genus with either one or two vertices (the boundary curves crushed to points). 
 The Figures~\ref{arc_dot1} \& \ref{arc_dot2} are illustrated for the two cases in which the two intersection points of an arc are either of opposite orientations or of same orientation. In general, the two cases are mixed and the arcs are not necessarily parallel.
 
 In Figures~\ref{arc_dot1}(1) and \ref{arc_dot2}(1), the two endpoints of the arc $\g$ are on two distinct boundary components of $S_g \setminus \a_0$. The two boundary components are crushed to two points, and the arc $\g$ is collapsed to a single point as well. It follows that the number of components of $\a_1 \setminus \g$ is same as the number of edges in one-vertex triangulation of surface $S_{g-1}$. By Lemma \ref{Euler calculation}, it is $C_\sharp = 6 (g-1) - 3 = 6g - 9$.

\begin{figure}[htbp]
\vspace{-0cm}
\centering
\scalebox{.80}{\includegraphics[width=0.80\textwidth]{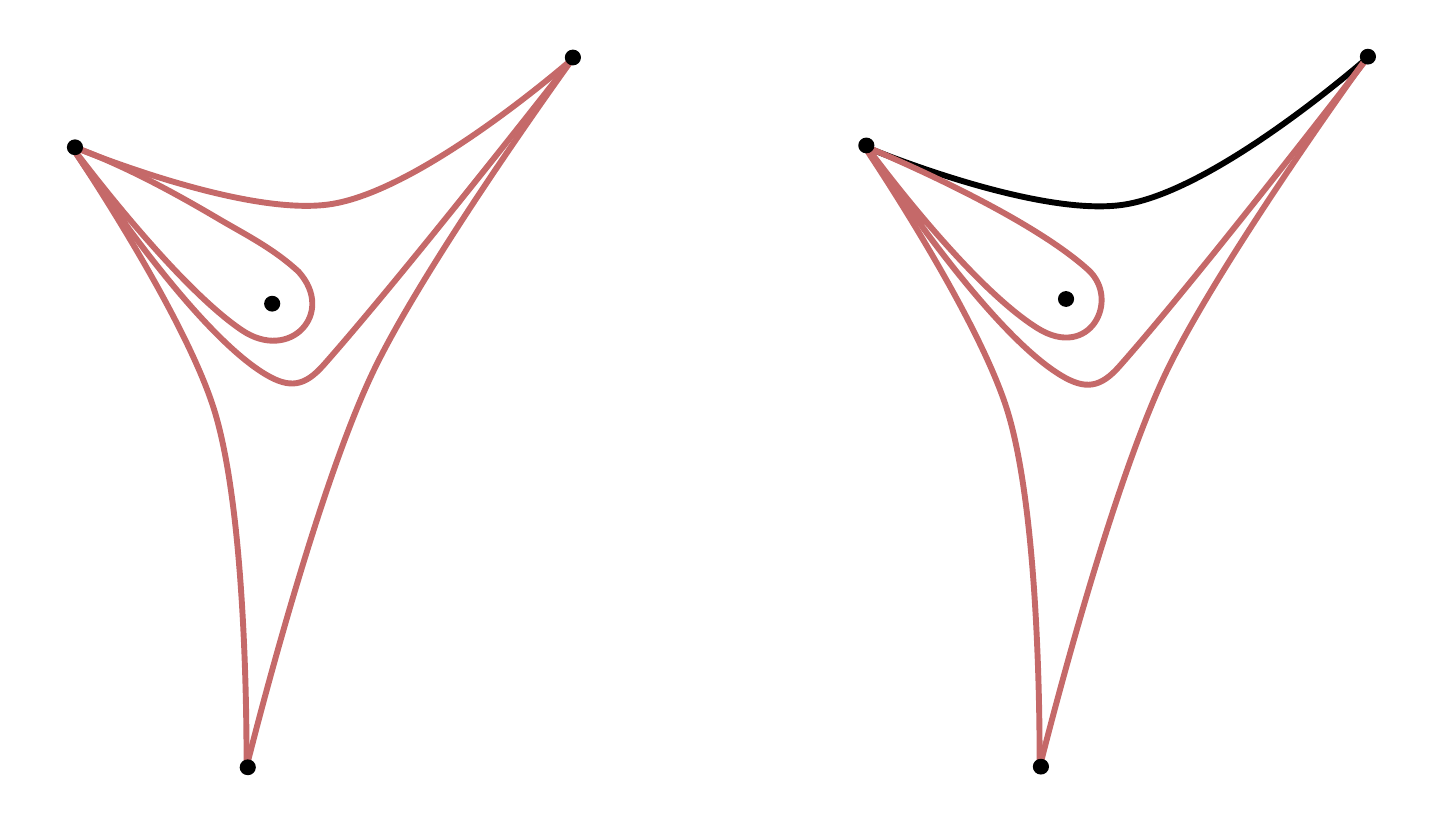}}
\vspace{-0cm}
\caption{\small  The triangles with one puncture are taken from the triangulation in the bottom right illustrations of Figures~\ref{arc_dot1} and \ref{arc_dot2}, in which the vertices of the triangles are identified to be one vertex.  At most two more edges (two classes of parallel components of $\a_1 \setminus \g$) can be added to the triangle. }
\label{triangulation}
\end{figure}

In the second case, Figures~\ref{arc_dot1}(2) and \ref{arc_dot2}(2), both endpoints of $\g$ are on the same boundary/vertex of
$S_{\hat{g}}$. The reference arc $\g$ will be one of the edges of a one-vertex triangulation of a once punctured $S_{g-1}$. Since two more non-parallel edges are allowed to be added to the triangle containing the vertex crushed by the other boundary as illustrated in Figure~\ref{triangulation}. The number of components of $ \a_1 \setminus \g $ is
$C_\sharp = 6 (g-1) - 3 - 1 + 2 = 6g - 8$.

In the third case, the number of components of $\a_1 \setminus \g$  surpasses the second case. Hence, in all cases, the most possible number of non-parallel components of $\a_1 \setminus \g$ is $C_\sharp = 6g - 8$. 

The number of components of $\a_1 \setminus \g$ is the same as $|\a_1 \cap \g| $. If $|\a_1 \cap \g| > C_\sharp$, then there must be at least two arcs of $ \a_1 \setminus \g $ that are parallel.  More generally, if $ |\a_1 \cap \g| > (k-1) C_\sharp  = (k-1)(6g-8)$, there must be at least $k$ parallel arcs by the Pigeonhole principle. 

\end{proof}

\section{Efficient geodesics}
\label{section: efficient geodesics}

In this section, we will recall some fundamentals of the efficient geodesics from \cite{BMM}. Let $v$ and $w$ be vertices of $\mathcal{C}(S_g)$ with $d(v,w)=d \geq 3$ and $v=v_0, v_1, \cdots, v_{\text{d}} = w$ be a geodesic connecting $v$ to $w$. The intersection between the representatives $\a_i$ of $v_i$ and the reference arc $\g$ produces a sequence of natural numbers along $\g$, which is called the \emph{intersection sequence} of the $\a_i$ along $\g$. 

An intersection sequence $\sigma$ of natural numbers $(j_1, j_2, \cdots, j_k)$ can be arranged in a normal form called \emph{sawtooth form}, that is, 

$$j_i \leq j_{i+1} \Longrightarrow j_{i+1} =  j_i + 1.$$

If the intersection sequence of the curves along $\g$ in sawtooth form is viewed as a function, $\{1, 2, \cdots, N\} \rightarrow \mathbb{N}$, where $N$ is the cardinality of $\g \cap (\a_1 \cup \a_2 \cup \cdots \cup \a_{d-1})$, then the graph is a set of lattice points of integer coordinates. The graph of the sequence is called \emph{dots}, and the line segments resulting from the join of dots with slope 1 are called \emph{ascending segments}. The resulting graph of intersection sequence $\sigma$ in sawtooth form is called the \emph{dot graph}, denoted by $G(\sigma)$. An example of dot graph is illustrated in Figure \ref{DotGraph}.

\begin{figure}[htbp]
\centering
\includegraphics[width=0.80\textwidth]{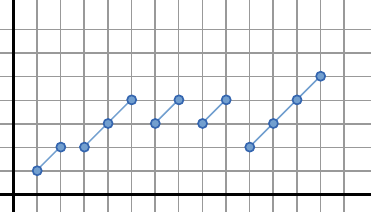}
\caption{\small A typical dot graph of intersection sequence in sawtooth form. }
\label{DotGraph}
\end{figure}

Next, we will deal with certain shapes of polygons in the dot graph.
A polygon in the plane is a \emph{dot graph polygon} if
\begin{enumerate}
\item the edges all have slope 0 or 1,
\item the edges of slope 0 have nonzero length, and 
\item the vertices all have integer coordinates.
\end{enumerate}
The edges of slope 1 in a dot graph polygon are called \emph{ascending edges} and the edges of slope 0 are called \emph{horizontal edges}.

Let $\sigma$ be a sequence of natural numbers in sawtooth form.  A dot graph polygon is a \emph{$\sigma$-polygon} if:
\begin{enumerate}
 \item the vertices are dots of $G(\sigma)$ and
 \item the ascending edges are contained in ascending segments of $G(\sigma)$. 
\end{enumerate}

\begin{figure}[htbp]
\centering
\includegraphics[width=0.90\textwidth]{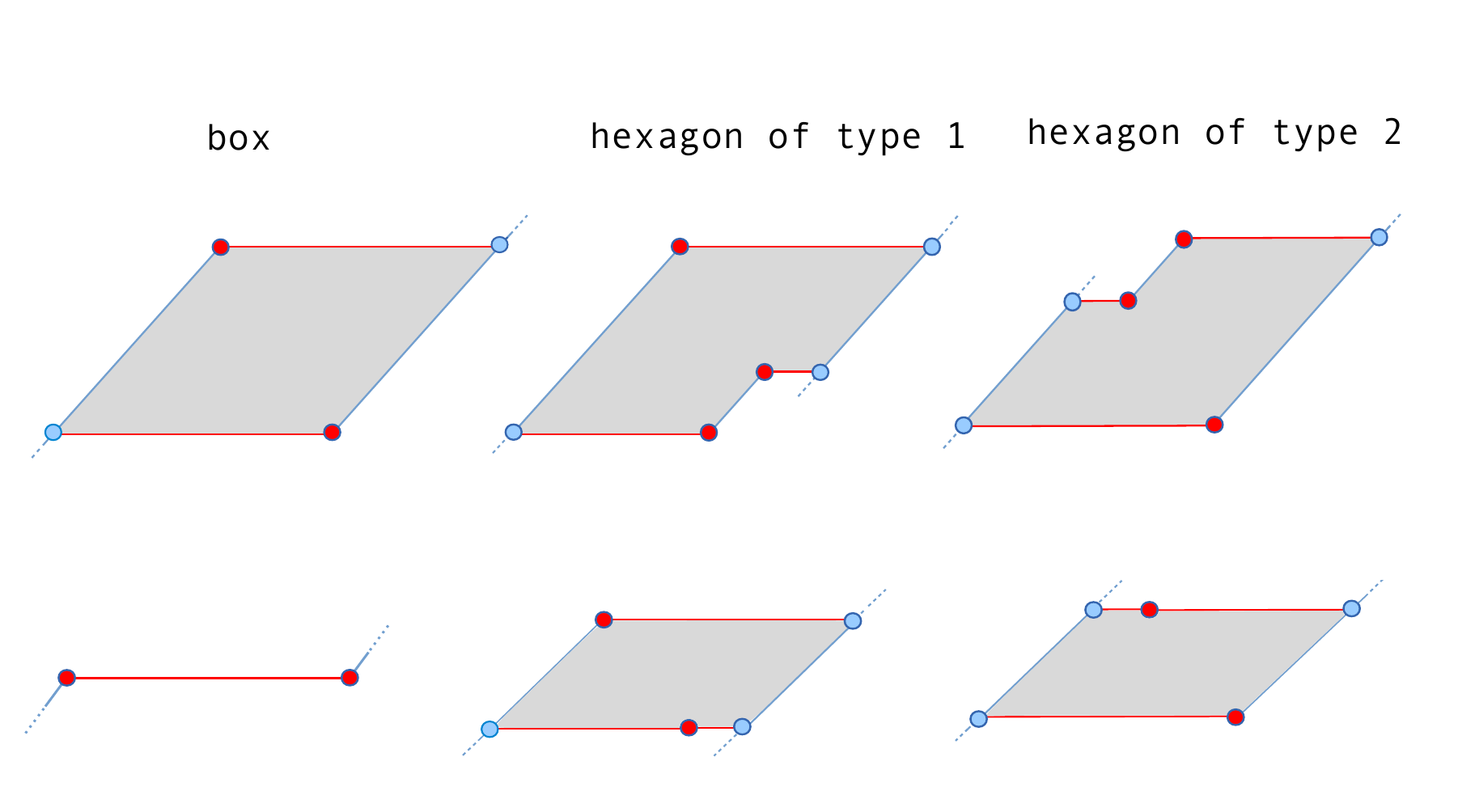}
\caption{\small Box and hexagons of types 1 and 2 on the top; Corresponding degenerate box and hexagons at the bottom. Red dots are endpoints of ascending segments, and blue dots may or may not be endpoints. }
\label{BoxHexagons}
\end{figure}

A \emph{box} in $G(\sigma)$ is a $\sigma$-quadrilateral $P$ with the following two properties:
\begin{enumerate}
 \item the leftmost ascending edge contains the highest point of some ascending segment of $G(\sigma)$ and
 \item the rightmost ascending edge contains the lowest point of some ascending segment of $G(\sigma)$.
\end{enumerate}

Up to translation and changing the edge lengths, there are four types of dot graph hexagons; two have an acute exterior angle, and we will not need to consider these.  Notice that a dot graph hexagon necessarily has a leftmost ascending edge, a rightmost ascending edge, and a middle ascending edge.  This holds even for degenerate hexagons since horizontal edges are required to have nonzero length.

A \emph{hexagon of type 1} in $G(\sigma)$ is a $\sigma$-hexagon where:
\begin{enumerate}
\item no exterior angle is acute, 
\item the middle ascending edge is an entire ascending segment of $G(\sigma)$, and 
\item the minimum of the middle ascending edge equals the minimum of the leftmost ascending edge, 
\item the leftmost ascending edge contains the highest point of an ascending segment of $G(\sigma)$.
\end{enumerate}

Similarly, a \emph{hexagon of type 2} in $G(\sigma)$ is a $\sigma$-hexagon that satisfies the first two conditions above and the following third  and fourth conditions:
\begin{enumerate}
\item[($3'$)] the maximum of the middle ascending edge equals the maximum of the rightmost ascending edge,
\item[($4'$)]  the rightmost ascending edge contains the lowest point of an ascending segment of $G(\sigma)$.
\end{enumerate}

See Figure~\ref{BoxHexagons} for pictures of boxes and hexagons of types 1 and 2 and their degenerate cases.

\begin{figure}[htbp!]
\vspace{0.5cm}
\labellist
\small\hair 2pt
\pinlabel $3$ at 15 250
\pinlabel $4$ at 47 250
\pinlabel $5$ at 79 250
\pinlabel $3$ at 110 250
\pinlabel $4$ at 144 250
\pinlabel $5$ at 175 250
\pinlabel $3'$ at 359 252
\pinlabel $4'$ at 391 252
\pinlabel $5'$ at 517 252
\pinlabel $5$ at 150 83
\pinlabel {\tiny $-+$} at 285 91
\pinlabel $4$ at 150 55
\pinlabel {\tiny $++$} at 265 63
\pinlabel $3$ at 150 25
\pinlabel {\tiny $+-$} at 244 33
\endlabellist
	\centerline{\includegraphics[width=0.95\textwidth]{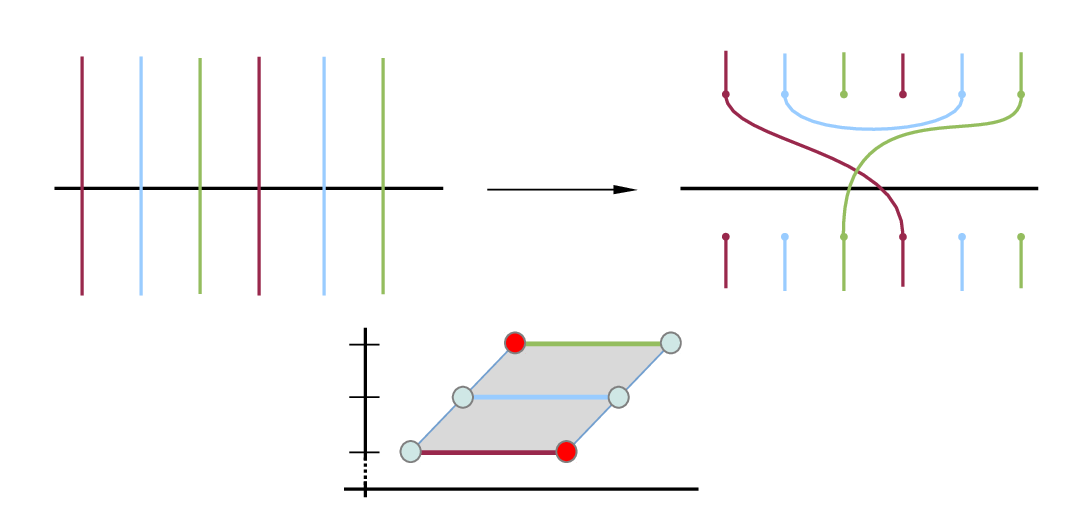}}
	\caption{{\small An example of a set of surgeries as in the box case.}}
	\label{Box}
\end{figure}

\begin{figure}[htbp!]
\vspace{0.5cm}
\labellist
\small\hair 2pt
\pinlabel $3$ at 27 281
\pinlabel $4$ at 42 281
\pinlabel $5$ at 57 281
\pinlabel $6$ at 72 281
\pinlabel $7$ at 87 281
\pinlabel $3$ at 101 281
\pinlabel $4$ at 115 281
\pinlabel $5$ at 129 281
\pinlabel $2$ at 144 281
\pinlabel $3$ at 158 281
\pinlabel $4$ at 172 281
\pinlabel $5$ at 186 281
\pinlabel $6$ at 200 281
\pinlabel $7$ at 214 281
\pinlabel $5'$ at 361 281
\pinlabel $6'$ at 375 281
\pinlabel $3'$ at 404 281
\pinlabel $7'$ at 520 281
\pinlabel \begin{rotate}{-20}\textcolor{red}{\Large $\boldsymbol{\times}$}\end{rotate}  at 480 265
\pinlabel $4'$ at 374 172
\pinlabel {\tiny $\mplus$} at 292 115
\pinlabel $7$ at 122 111
\pinlabel {\tiny $++$} at 280 98
\pinlabel $6$ at 122 93
\pinlabel {\tiny $\pminus$} at 229 80
\pinlabel $5$ at 122 74
\pinlabel \begin{rotate}{-20}\textcolor{red}{\Large $\boldsymbol{\times}$}\end{rotate} at 329 72
\pinlabel {\tiny $\mminus$} at 217 63
\pinlabel $4$ at 122 57
\pinlabel {\tiny $\mplus$} at 198 46
\pinlabel $3$ at 122 40
\pinlabel $2$ at 122 24
\endlabellist
	\centerline{\includegraphics[width=1.0\textwidth]{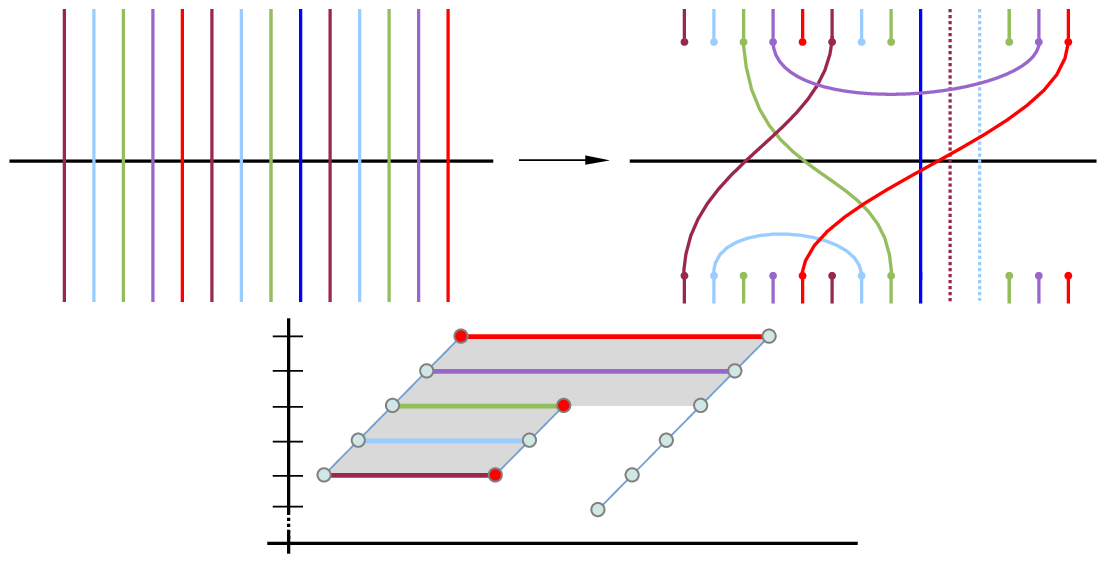}}
	\caption{{\small An example of a set of surgeries as in the hexagon case.}}
	\label{Glock}
\end{figure}

As mentioned in the \S \ref{main results}, given a path $p$ with vertices $v=v_0, v_1, \cdots, v_{\text{d}}=w$ with $d \geq 3$ in $\mathcal{C}(S_g)$, the {\em{complexity}} of the path $p$ is defined to be $$\kappa(p) = \sum_{k=1}^{d-1} (i(v_0, v_k) + i(v_k, v_{\text{d}})).$$

We will show that a geodesic of minimal complexity is initially super efficient. If $d-1 \leq 15 \cdot (6g-8)$, the initial efficiency of this short geodesic is established by the existence of initially efficient geodesics \cite{BMM}. The obstruction of a geodesic being minimal complexity is the occurrence of these three types of dot graph polygons. If such dot graph polygons occur, we can perform the corresponding curve surgeries to reduce the complexity of the geodesic. Here are some examples for the box surgery, Figure~\ref{Box}, and hexagon surgery of type 1, Figure~\ref{Glock}. 
The outline of the proof is the constraint on the distance (i.e, $d-1 \geq 15 \cdot (6g-8) + 1$) eventually makes the geodesic of minimal complexity to have infinite length. The difficulty is to obtain an explicit bound of super initial efficiency that only depends on the genus of a surface. Ultimately, we will argue that the explicit bound is $15 \cdot (6g-8)$ for $g \geq 3$ and $44$ for $g = 2$.

\section{Obstruction to surgeries}
\label{section: Obstruction to surgeries}

Let $v=v_0, v_1, \cdots, v_{\text{d}}=w$ be a geodesic of minimal complexity with sufficiently large distance (i.e., $d \geq 15\cdot(6g-8)+1$ for $g \geq 3$, and $d \geq 45$ for $g=2$), and let $\a_0$, $\a_1$ and $\a_d$ be the representatives of $v_0, v_1$ and $v_{\text{d}}$ that intersect minimally without any triple points.  By Lemma \ref{lemma: parallel arcs}, we know that if the intersection number $|\a_1 \cap \g|$ is larger than $6g - 8$, then it will create some parallel arcs of $\a_1 \setminus \g$. More generally, if the intersection number $|\a_1 \cap \g|$ is larger than $(k-1)(6g-8)$, then it will create at least $k$ parallel arcs of $\a_1 \setminus \g$. Suppose the number of the parallel arcs of $\a_1 \setminus \g$ is $k$. In the following, let us focus on the left end of the $k$ parallel arcs intersecting with the reference arc $\g$, which gives rise to the leftmost $k$ intersection points of $\a_1 \cap \g$. See Figure \ref{two_stack} for an example when $k = 3$. Assume the intersection sequence on $\g$ is in sawtooth form, and intersection subsequence trapped in the $k$ intersection points of $\a_1 \cap \g$ determines $k$ ascending segment starting at index 1 in the dot graph, possibly with other ascending segments in the middle. Recall that our goal is to show the geodesic has infinite length eventually, so we will argue the worst scenario that the highest index of the curve in the $k$ ascending segments is minimal. In this way, we will increment the length of the geodesic segment minimally. Let $e_1, e_2, \cdots, e_k$ be the $k$ ascending segments in the dot graph, and let $\max(e_i)$ be the highest index of the curve in $e_i$. We have the following result to describe the shape of dot graph.

\begin{lemma}
\label{lemma: triangular dot graph}
Suppose that the highest index of the curve in the $k$ ascending segments is minimal. Then $\max(e_i) - \max(e_{i+1}) = 1$ for $1 \leq i \leq k-1$, and  $\max(e_k) = 1$. It implies that the curve of highest index is in the leftmost ascending segment, $e_1$.
\end{lemma}

\begin{proof}
   Assume that $\max(e_i) \leq \max(e_{i+1})$ for some $i$. It follows that the geodesic does not have smallest complexity using the proof of Proposition 3.1 in \cite{BMM}. That implies, $\max(e_i) > \max(e_{i+1})$ for all $i$. The minimum of $\max(e_1)$ forces $\max(e_i) - \max(e_{i+1}) = 1$ and $\max(e_k) = 1$. Then the highest index of $e_1$ is $\max(e_1) = k$. 
\end{proof}

This triangular shape dot graph has the following explanation in the parallel arcs of $\a_1 \setminus \g$. First, notice that by efficiency every rectangular disc
illustrating the parallel nature of two arcs of $\a_1 \setminus \g$  must contain/trap an arc of $\a_2 \setminus \g$. Otherwise, two parallel arcs allows a box surgery, so the complexity is not minimal. Now, consider two such rectangular discs stack together, that is, we have three parallel arcs, $\a^1_1, \a^2_1 , \a^3_1 \subset \a_1 \setminus \g $ arranged with $\a^2_1$
being common to two rectangular discs.  Then by efficiency there are arcs $\a^1_2 , \a^2_2 \subset \a_2 \setminus \g $ such that $\a^1_2 $ is contained in the
rectangular disc having $\a^1_1$ \& $\a^2_1$ on its boundary; and, $\a^2_2$ is contained in the rectangular disc having $\a^2_1$ \& $\a^3_1$ on its boundary.  But, then this
configuration will have $\a^1_2$ and $\a^2_2$ being parallel.  Moreover, by efficiency again we must have an arc of $\a_3 \setminus \g$ contained in the
rectangular disc (which is a sub-disc of the two stacked rectangular discs we started with) that illustrates $\a^1_2$ and $\a^2_2$ are parallel.  (To drive the nail home, without an 
arc of $\a_3 \setminus \g$ we would have a box surgery.)  Figure \ref{two_stack} illustrates this stacking of two rectangular discs configuration and its corresponding triangular dot graph. Similarly, the $k$ parallel arcs of $\a_1$ traps $k-1$ arcs of $\a_2$, and that turns out to trap $k-2$ arcs of $\a_3$, etc.. 

\begin{figure}[htbp]
\centering
\scalebox{.80}{\includegraphics[width=0.90\textwidth]{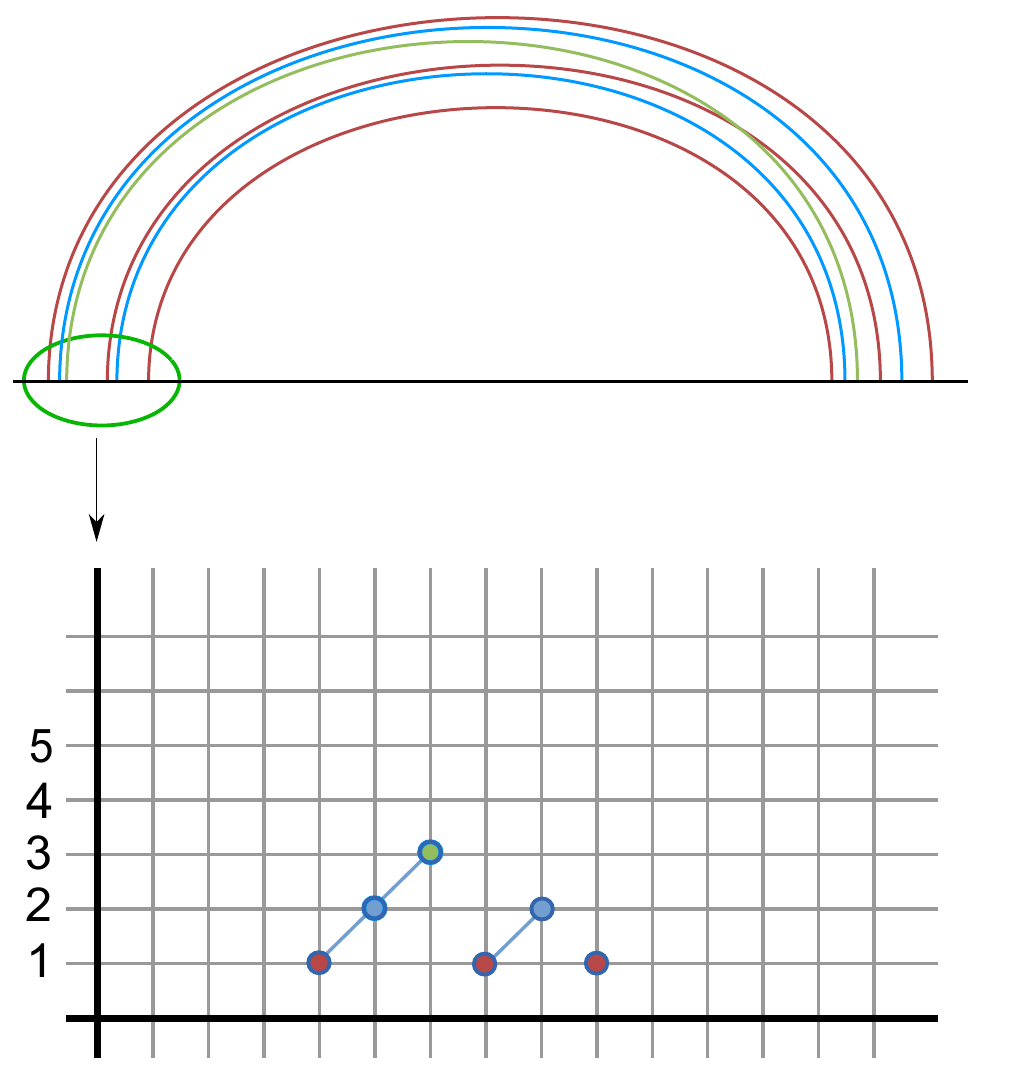}}
\caption{\small The three parallel arcs of $\a_1$ are colored red; the two parallel arcs of $\a_2$ are colored blue; and, the arc of $\a_3$ is colored green. At the bottom, the  triangular dot graph corresponds to the intersection points on the left end of the 3-stack rainbow.}
\label{two_stack}
\end{figure}

\section{Rainbow}
\label{section: rainbow}

The triangular dot graph of the intersection points on the left end in the previous lemma forms a nested rectangular region if we lay out all the parallel arcs according to the indices. We call the nested region a {\em rainbow}, and {\em k-stack rainbow} if it contains $k$ parallel arcs of the outermost boundary curve. If we continue to play this stacking of rectangular game we can force arcs of $\a_i$, with $i$ increasing into a rainbow of parallel arcs.   Figure \ref{five_stack}
illustrates the configuration when we have five parallel arcs of $\a_1 \setminus \g$. As shown in Lemma \ref{lemma: triangular dot graph}, the stacking of five $\a_1$ arcs forces the occurrence of four
$\a_2$ arcs between the $\a_1 {\rm 's}$ by efficiency; three $\a_3$ arcs between the $\alpha_2 {\rm 's}$; and, two $\a_4$ arcs between the $\a_3 {\rm 's}$.
The illustration is actually missing a single arc between the two $\a_4$ arcs (making a box surgery available) since the illustration is becoming cluttered,
but efficiency requires that one should be inserted.

\begin{figure}[htbp]
\centering
\scalebox{.80}{\includegraphics[width=0.90\textwidth]{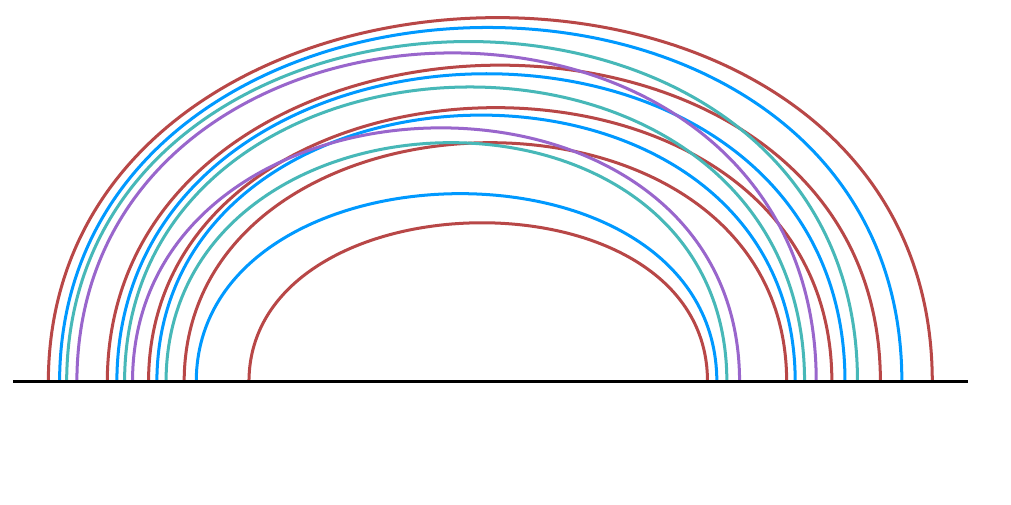}}
\caption{\small  A 5-stack rainbow.  Again parallel arcs of $\a_1$ are colored red; arcs of $\a_2$ are colored blue; arcs of $\a_3$ is colored green; and, the single arc
of $\a_4$ is colored magenta.}
\label{five_stack}
\end{figure}

In this section, we show that if $\a_1 \cap \g$ is sufficiently large, then it gives rise to parallel arcs of $\a_i \setminus \g$  for some curve $\a_i$. Our goal is to obtain such an estimate of the intersection number to make this happen.  This is a first step in our argument to show the process is infinite once we modify the estimate of the intersection number with reference arcs. 

Our argument in a nutshell is that, $k$ parallel arcs of $\a_1$ will force the existence of $k$ parallel arcs of $\a_{i_1}$, for some $i_1 > 1$, which will force
the existence of $k$ parallel arcs of $\a_{i_2}$, for some $i_2 > i_1$, and etc..  Since all parallel arcs for each $\a_{i_j}$ will have their endpoints on $\g$
and since $k$ is fixed, we get a contradiction in that our geodesic is of finite length but we have an infinite sequence of subarcs of  $\a_1, \a_{i_1}, \a_{i_2}, \cdots ,\a_{i_j} , \cdots$.
The core of this argument is calculating a value for $k$.  We start off with a warm-up case.

\begin{proposition}
\label{proposition: calculate k}
Let $v=v_0, v_1, \cdots, v_{\text{d}}=w$ be a sufficiently long geodesic of minimal complexity in $\mathcal{C}(S_{g \geq 2})$, and let $\a_0$, $\a_1$ and $\a_{\text{d}}$ be representatives of  $v_0$, $v_1$ and $v_{\text{d}}$, respectively. There exists an integer $k$ satisfying that, for any reference arc $\g$ of the pair $(\a_0, \a_d)$ such that $\a_1 \setminus \g$ has $k$ parallel arcs, then $|\a_{k+1} \cap \g| \geq k$, where $\a_{k+1}$ is a representative of $v_{k+1}$. More precisely, the minimal value is $k=5$ for $g=2$, $k = 7$ for $g = 3, 4, 5, 6, 7$, and $k=8$ for $g \geq 8$. 
\end{proposition}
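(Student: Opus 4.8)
The plan is to reduce the statement to a single numerical inequality that compares two estimates for the intersection number $|\alpha_1 \cap \alpha_{k+1}|$: a lower bound coming from the IGI and an upper bound coming from counting crossings along the reference arc. The value of $k$ is then forced to be the least integer for which the lower bound strictly dominates, and a direct check of that inequality produces the three cases $g=2$, $3\le g\le 7$, and $g\ge 8$.

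First I would record the lower bound. Because $v_0,\dots,v_d$ is a geodesic we have $d(v_1,v_{k+1})=k$, and $\alpha_1,\alpha_{k+1}$ are in minimal position; hence once $k\ge 3$ the pair $(\alpha_1,\alpha_{k+1})$ is filling. Applying Theorem~\ref{theorem: intersection growth} to this pair gives
$$|\alpha_1 \cap \alpha_{k+1}| \ge 2^{k-3}(2g-1)\ \ (g>2), \qquad |\alpha_1 \cap \alpha_{k+1}| \ge 2^{k-4}\cdot 12 \ \ (g=2),$$
valid for $k\ge 3$ and $k\ge 4$ respectively. The fact that the $g=2$ bound only begins at distance $4$ is exactly what pushes the $g=2$ threshold to a larger value of $k$ than naive extrapolation would suggest.

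Next I would establish the upper bound, which I expect to be the main obstacle. By the computation of the preceding section, the arcs of any curve in $S_g\setminus\alpha_0$ relative to $\gamma$ fall into at most $C_\sharp=6g-8$ parallel classes. Using this, together with minimal position and the absence of box and hexagon surgeries guaranteed by Lemma~\ref{lemma: descending sawtooth form}, I would show that each arc of $\alpha_{k+1}\setminus\gamma$ can meet $\alpha_1$ at most $C_\sharp$ times, so that cutting $\alpha_{k+1}$ at its $|\alpha_{k+1}\cap\gamma|$ intersection points with $\gamma$ yields
$$|\alpha_1 \cap \alpha_{k+1}| \le |\alpha_{k+1}\cap\gamma|\cdot(6g-8).$$
The difficulty here is precisely the \emph{localization}: the IGI only guarantees many intersections \emph{somewhere} on $S_g$, and one must control how a single arc of $\alpha_{k+1}$ can weave through the at-most-$C_\sharp$ parallel classes carved out by $\gamma$, ruling out excess crossings via the dot-graph bookkeeping and the surgery obstructions. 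The rainbow of $\alpha_1$ and the descending-sawtooth structure are the tools that confine this interaction to the reference arc.

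Finally I would combine the two bounds by contradiction. If $|\alpha_{k+1}\cap\gamma|\le k-1$, the upper bound forces $|\alpha_1\cap\alpha_{k+1}|\le (k-1)(6g-8)$, which contradicts the IGI lower bound as soon as
$$2^{k-3}(2g-1) > (k-1)(6g-8)\ \ (g>2), \qquad 2^{k-4}\cdot 12 > (k-1)\cdot 4\ \ (g=2).$$
Taking $k$ to be the least integer satisfying the relevant inequality then yields $|\alpha_{k+1}\cap\gamma|\ge k$. A routine check gives $k=5$ for $g=2$, $k=7$ for $3\le g\le 7$, and $k=8$ for $g\ge 8$; the asymptotic comparison $2^{k-2}>6(k-1)$, whose truth value flips between $k=7$ and $k=8$, explains why $k=8$ suffices for all large genus and no larger value is ever required.
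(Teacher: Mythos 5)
You have correctly identified the localization problem as the crux, but your proposal does not solve it, and the upper bound you hang everything on, $|\alpha_1 \cap \alpha_{k+1}| \le |\alpha_{k+1}\cap\gamma|\cdot(6g-8)$, is both unproved and untenable. The count $C_\sharp = 6g-8$ bounds the number of \emph{parallel classes} of arcs of a curve that lies in $S_g\setminus\alpha_0$ cut along $\gamma$ --- it applies to $\alpha_1$ precisely because $\alpha_1$ is disjoint from $\alpha_0$. The curve $\alpha_{k+1}$ fills with $\alpha_0$, so its arcs relative to $\gamma$ are not confined to $S_g\setminus\alpha_0$, and nothing bounds how many times a single arc of $\alpha_{k+1}\setminus\gamma$ can cross $\alpha_1$ far away from $\gamma$: the box and hexagon surgery obstructions constrain the intersection \emph{sequence along} $\gamma$ (the dot graph), not intersections elsewhere on the surface, so the "dot-graph bookkeeping" you invoke has no purchase on this claim. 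A sanity check exposes the circularity: if $\alpha_{k+1}$ happened to miss $\gamma$ entirely, your inequality would force $|\alpha_1\cap\alpha_{k+1}|=0$, contradicting $d(v_1,v_{k+1})=k\ge 3$; in other words, your upper bound already presupposes that all the $\alpha_1$--$\alpha_{k+1}$ intersections are tied to $\gamma$, which is essentially the statement being proved. Note also that the hypothesis that $\alpha_1$ has $k$ parallel arcs never actually enters your final contradiction.

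The paper's mechanism supplies exactly the missing localization, and it runs the IGI through a different pair of curves. By Lemma \ref{lemma: descending sawtooth form}, a $k$-stack rainbow of $\alpha_1$ arcs traps an arc of $\alpha_k$; since $\alpha_{k+1}$ is disjoint from $\alpha_k$, it cannot transversely cross the rainbow. One then surgers an outermost rainbow arc of $\alpha_1$ with a subarc of $\gamma$ to form a loop $\alpha_1'$ that is disjoint from $\alpha_0$, so that $d(\alpha_1',\alpha_{k+1}) \ge k$ (because $d(\alpha_0,\alpha_{k+1})=k+1$), and --- this is the key point --- every intersection of $\alpha_{k+1}$ with $\alpha_1'$ must occur on the $\gamma$-subarc, the rainbow being blocked. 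Applying the IGI to $(\alpha_1',\alpha_{k+1})$ then yields a \emph{lower} bound $2^{k-3}(2g-1) \le |\gamma\cap\alpha_{k+1}|$ for $g>2$ (respectively $2^{k-4}\cdot 12$ for $g=2$) directly, with no upper bound needed at all; $C_\sharp$ enters only afterwards, as a pigeonhole divisor showing $|\gamma\cap\alpha_{k+1}|/C_\sharp > k-1$ produces $k$ parallel arcs of $\alpha_{k+1}$. Your numerical inequalities $2^{k-3}(2g-1) > (k-1)(6g-8)$ and $2^{k-4}\cdot 12 > 4(k-1)$ coincide with the paper's, so your case values $k=5$, $k=7$, $k=8$ are correct, but they play a different logical role there, and the geometric core of the proof --- the trapped $\alpha_k$ arc and the surgered loop $\alpha_1'$ that converts curve--curve intersections into intersections with the reference arc --- is absent from your proposal.
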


\begin{proof}
By Lemma \ref{lemma: triangular dot graph}, if there is a $k$-stack rainbow illustrating $k$ parallel $\a_1$ arcs then by efficiency the rainbow will contain an arc of $\a_k$, that is, the curve of highest index in the leftmost ascending segment. This point is key since
the disjointness of $\a_k$ and $\a_{k+1}$ implies that $\a_{k+1}$ cannot transversely intersect the rainbow. We now perform the obvious surgery that takes a union of an arc in the rainbow of 
$\a_1 \setminus \g$ and a subarc of $\g$ to form loops that intersect $\g$ either
zero (for left configuration in Figure \ref{parallel arcs}) or once (for right configuration in Figure \ref{parallel arcs}).  See Figure \ref{loops}.
Call this new loop $\a_1^\prime$ and notice that it does not intersect $\a_0$.

\begin{figure}[htbp]
\centering
\scalebox{.80}{\includegraphics[width=0.90\textwidth]{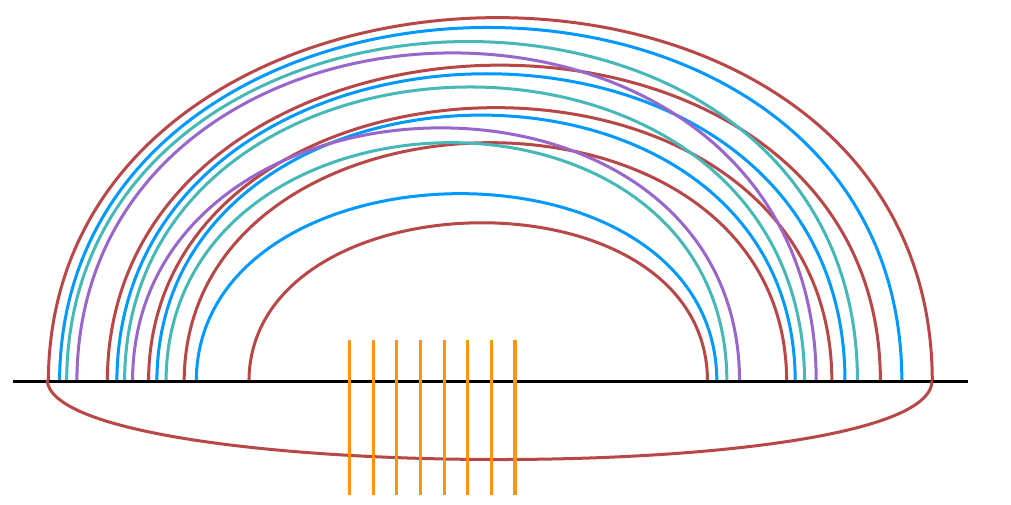}}
\caption{\small  The $\a_1'$ is the union of an outermost arc of $\a_1$ and a subarc of $\g$. The orange arcs intersecting the middle of $\g$ are subarcs of $\a_{k+1}$.}
\label{loops}
\end{figure}

Now consider $|\a_1^\prime \cap \a_{k+1} |$.  Notice that since $\a_{k+1}$ cannot intersect transversely the $k$-stack rainbow, the only place
$\a_1^\prime$ can intersect $\a_{k+1}$ is between the two ``ends'' of the rainbow as illustrated in Figure \ref{loops}.  
Thus, each intersection of $\a_{k+1}$ with $\a_1^\prime$ also corresponds to an intersection of $\a_{k+1}$ with $\g$.  Our strategy is to
``drive up'' the value of $|\a_{k+1} \cap \g|$ so that there are at least $k$ parallel arcs of $\a_{k+1} \setminus \g$.   If so, we might
repeat the above rainbow construction for these parallel $\a_{k+1}$ arcs to say that we must have $\a_{2k + 1}$ intersecting $\g$
enough times to have $k$ parallel arcs.  Then, an iteration of our rainbow construction never ends, which is a contradiction because the
distance between $\a_0$ and $\a_d$ is finite.

With this in mind, we know that $d(\a_1^\prime , \a_{k+1}) \geq k$, because $|\a_0 \cap \a_1'|=0$.  Now using Theorem
\ref{theorem: intersection growth} for $g = 2$,  we have $2^{k-4} \cdot 12 \leq | \a_1^\prime \cap \a_{k+1} | (= |\g \cap  \a_{k+1} |)$ and $C_\sharp = 6(2) - 8 = 4$.  To drive up this
intersection number, we ask the question: for what value of $k$ does
\begin{equation}
\label{genus-2-a1}
 \frac{2^{k - 4}\cdot 12}{C_\sharp}  = \frac{2^{k - 4} \cdot 12}{4}  > k - 1?
\end{equation}
Our initial answer is to have $k=5$. With $|\a_1 \cap \g | >  4 \cdot 4 = 16$,  we will get a rainbow of $\a_1$ arcs that
is at least a $5$-stack rainbow.  We will need to come back to equation \ref{genus-2-a1} later to increase the size of this $\a_1$ rainbow for achieving
a contradiction.

For $g>2$, we have $2^{k-3} (2g -1) \leq | \a_1^\prime \cap \alpha_{k+1} | (= | \g \cap \a_{k+1}|)$.  To drive up this
intersection number, our equation \ref{genus-2-a1} is changed to asking for what value of $k$ does
\begin{equation}
\label{genus-g-a1}
\frac{2^{k - 3} (2g -1)}{C_\sharp} = \frac{2^{k - 3} (2g -1)}{6g-8 } > k - 1?
\end{equation}

When $g = 3, 4, 5, 6, 7$, then $k=7$ works.  For $g \geq 8$, $k=8$ always works.  Again, to guarantee an $8$-stack rainbow for arcs of $\alpha_1$, $| \alpha_1 \cap \g | > 7 \cdot (6g - 8 )$, we will need to come back to this calculation later to increase the size of the $\alpha_1$ rainbow.
\end{proof}

Our strategy now is to argue that when we iterate this construction, having a rainbow forces the existence of another rainbow with the stack number not decreasing, which makes
our original geodesic of minimal complexity have infinite length, then we have a contradiction.  

More precisely, we show that there are curves with higher indices (i.e. $\a_2, \cdots, \a_k$) trapped in the rainbow in Proposition \ref{proposition: calculate k}. On the other hand, once we look at the next rainbow formed by $\a_{k+1} \setminus \g$ as the outermost boundary curve, it is possible that we can extend the curves trapped in the rainbow to the curves with lower indices. Suppose that we fix the curve $\a_{k+1}$, then we will search the subarcs of $\a_j \setminus \g$ with highest index $j$ and the subarcs of $\a_i \setminus \g$ with lowest index $i$ trapped in the rainbow formed by the subarcs of $a_{k+1} \setminus \g$. In this way, the curve $\a_i$ of lowest index will produce a new curve $\a_i'$ that acts as $\a_1'$ from $\a_1$ in the first rainbow. 

The process turns out to be a race between the distance between two curves $\a_i$ (or $\a_i'$),  $\a_{j+1}$ and the intersection number of $\g  \cap \a_{k+1}$.  We are looking for the worst scenario in which the distance is as small as possible while the intersection number is fixed. Same to the first rainbow, our goal is to increment the length of the geodesic segment minimally. In the next round, $\a_{j+1}$ becomes the outermost curve to form a new rainbow, and the process continues forever, which gives rise to a contradiction.

However, there is a subtlety as we examine the rainbow at the next stage.  Let us go to the general case that $g>2$ .  
For the $g > 2$ case and having $k=8$, our calculation gives us that $\a_1^\prime$  intersects $\a_9$ at least $2^{8-3} \cdot (2g-1) = 32 \cdot (2g-1)$ times; and, there must be
at least $11 (=\max ( \{\lceil \frac{32\cdot (2g-1)}{6g-8} \rceil | \text{for any    } g > 2\} ))$-stack rainbow coming from parallel arcs of $\a_9 \setminus \g$. The first observation is that there must be an $\a_{10}$ between the parallel arcs of $\a_9 \setminus \g$.

\begin{figure}[htbp!]
\labellist
\tiny \hair 2pt
\pinlabel $a_9^i$ at 405 400
\pinlabel $a_9^{i+1}$ at 440 400

\pinlabel $a_9^i$ at 405 270
\pinlabel $a_9^{i+1}$ at 440 270
\pinlabel $a_{10}^i$ at 420 325

\pinlabel $a_9^i$ at 400 140
\pinlabel $a_9^{i+1}$ at 440 140
\pinlabel $a_{8}^{i+1}$ at 415 120

\endlabellist
\centering
\scalebox{.90}{\includegraphics[width=0.90 \textwidth]{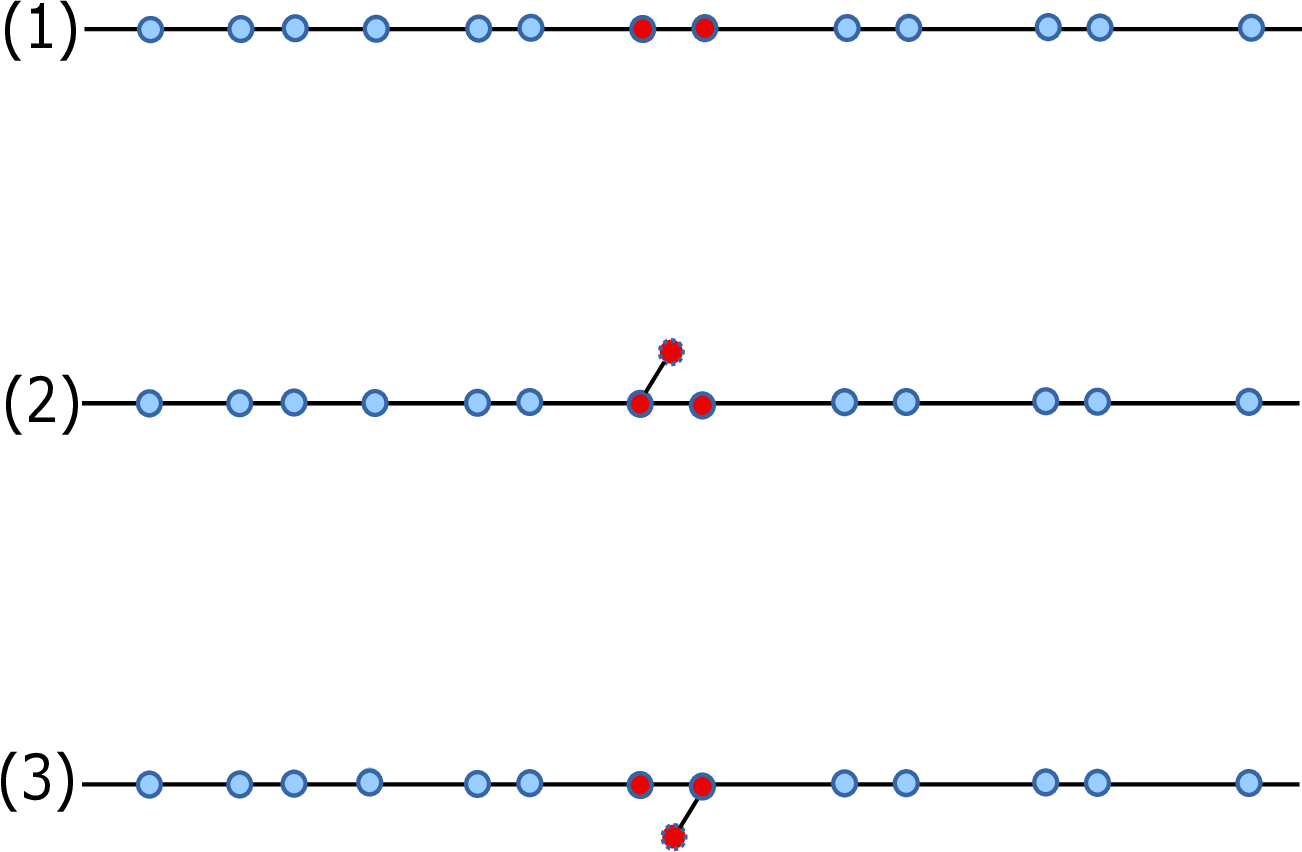}}
\vspace{-1cm}
\caption{\small In (1), two adjacent intersection points $a_9^i$ and $a_9^{i+1}$ are colored red; (2) and (3) illustrate two possible ways to insert $\a_{10}$ or $\a_8$ between two consecutive intersection points of $\g \cap \a_9$ to avoid a box surgery.}
\label{pattern1}
\end{figure}

\begin{lemma}
\label{lemma: exists higher index}
Let $a_{9}^1 , a_{9}^2, \cdots , a_{9}^{k} \subset \g \cap \a_{9}$ be the listing of consecutive endpoints in the order from left to right on the left end of the $k (k\geq 11)$-stack rainbow formed by the parallel arcs of $\a_{9} \setminus \g$. There must be an $\a_{10}$ in the rainbow. 
\end{lemma}

\begin{figure}[htbp]
\centering
\scalebox{.90}{\includegraphics[width=0.90 \textwidth]{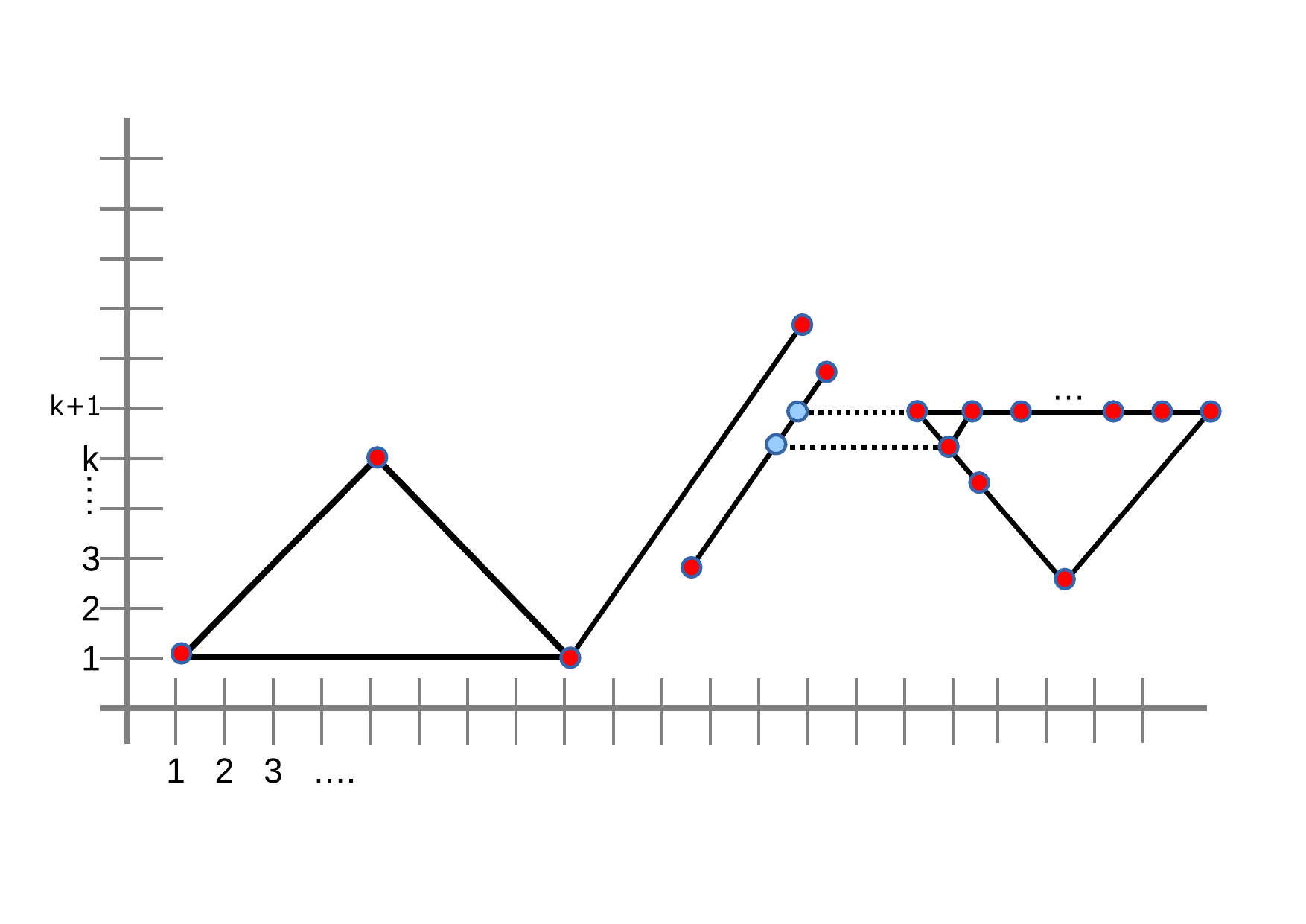}}
\caption{\small  A degenerate hexagon surgery of type 2 occurs due to the existence  of the two leftmost consecutive points of $\a_9 \cap \g$. }
\label{adjacent}
\end{figure}

\begin{proof}
 Between $ a_{9}^i$ and $a_{9}^{i+1}$ we must have either $a_{10}$ or $a_{8}$ intersect $\g$ by the efficiency of a geodesic. Otherwise, we can do a box surgery for the two consecutive intersection points, as illustrated in Figure~\ref{pattern1}. 
 
Suppose that there is no $a_{10}$ intersecting the reference arc $\g$ between $a_{9}$'s. In particular, there is no $a_{10}$ between the leftmost consecutive points of $\a_9 \cap \g$. To avoid a box surgery, there must be an $a_{8}$ instead. By induction, there is an $a_{7}$ on the $a_{8}$-level and so on. The upside down triangle in the right of Figure \ref{adjacent} illustrates this observation. 

Since the first (previous) rainbow is at bottom left of the current rainbow, there is an ascending segment blocking the two horizontal edges as the intersection sequence is in the sawtooth form. If there is no $\a_8$ in this ascending segment (i.e. it does not block the bottom edge), then it must have $\a_{10}$ to avoid a box surgery. On the other hand, if there is no $\a_9$ in this ascending segment (i.e., it does not block the top edge), then the leftmost $\a_9$ of the upside down triangle should be located on the top of the previous rainbow.  It follows that the two consecutive endpoints, $a_{9}^i$ and $a_{9}^{i+1}$, creates a degenerate hexagon surgery of type 2. See the bottom right figure in Figure \ref{BoxHexagons}. This violates the efficiency of the geodesic, and it completes the proof of the existence of $\a_{10}$. 
\end{proof}

As we mentioned before, we want to search the subarcs of $\a_j \setminus \g$ with highest index $j$ and subarcs of $\a_i \setminus \g$ with lowest index $i$ trapped in the rainbow formed by the subarcs of $a_9 \setminus \g$  with the constraint that the distance between $\a_j$ and $\a_i$ is minimal. 

\begin{lemma}
\label{lemma: formua of S(k)}
Suppose there is a $k$-stack $(k\geq 11)$ rainbow formed by the parallel arcs of $\a_{9} \setminus \g$. The minimal distance $S(k)$ between the curves of highest index and of lowest index trapped in the $k$-stack rainbow is as follows:
\begin{equation}
\label{equation}
    S(k)= 
\begin{cases}
    2p ,& \text{if } k = 3p, \ \text{for} \ p \geq 1\\
    2p + 1,              & \text{if } k = 3p + 1, \ \text{for} \ p \geq 1\\
    2p + 2,              & \text{if } k = 3p + 2, \ \text{for} \ p \geq 1
\end{cases}
\end{equation}
\end{lemma}

\begin{proof}
By Lemma \ref{lemma: exists higher index}, there exists an $\a_{10}$ in the $k$-stack rainbow.
More generally, there is an ascending segment starting with $\a_9$ between $a_{9}^i$ and $a_{9}^{i+1}$. Suppose there is no $a_{8}^{i+1}$ below $a_{9}^{i+1}$. The next ascending segment must be lower to avoid a box surgery. See Figure~\ref{pattern2}. Recall that our goal is to reduce the distance, so the highest index should be exactly one less.

To put more intersection points of $\g \cap \a_9$ on $\g$, we can insert a single $a_9^{i+2}$ next to $a_9^{i+1}$. On the left side of $a_9^i$, we need to insert $a_8^i$ to make $a_9^{i-1}$ a single dot. Hence, if there is no smaller index of $a_9^{i+1}$, the green box in Figure~\ref{pattern2} contains a local optimal pattern. 

\begin{figure}[htbp]
\labellist
\tiny \hair 2pt
\pinlabel $a_9^i$ at 393 638
\pinlabel $a_9^{i+1}$ at 470 638

\pinlabel $a_9^i$ at 393 490
\pinlabel $a_9^{i+1}$ at 470 490

\pinlabel $a_9^i$ at 393 345
\pinlabel $a_9^{i+1}$ at 470 345

\pinlabel $a_9^i$ at 393 203
\pinlabel $a_9^{i+1}$ at 465 203
\pinlabel $a_9^{i+2}$ at 510 203

\pinlabel $a_9^{i-1}$ at 353 50
\pinlabel $a_9^i$ at 405 50
\pinlabel $a_8^i$ at 375 15
\pinlabel $a_9^{i+1}$ at 465 50
\pinlabel $a_9^{i+2}$ at 505 50

\endlabellist
\centering
\scalebox{.90}{\includegraphics[width=0.90 \textwidth]{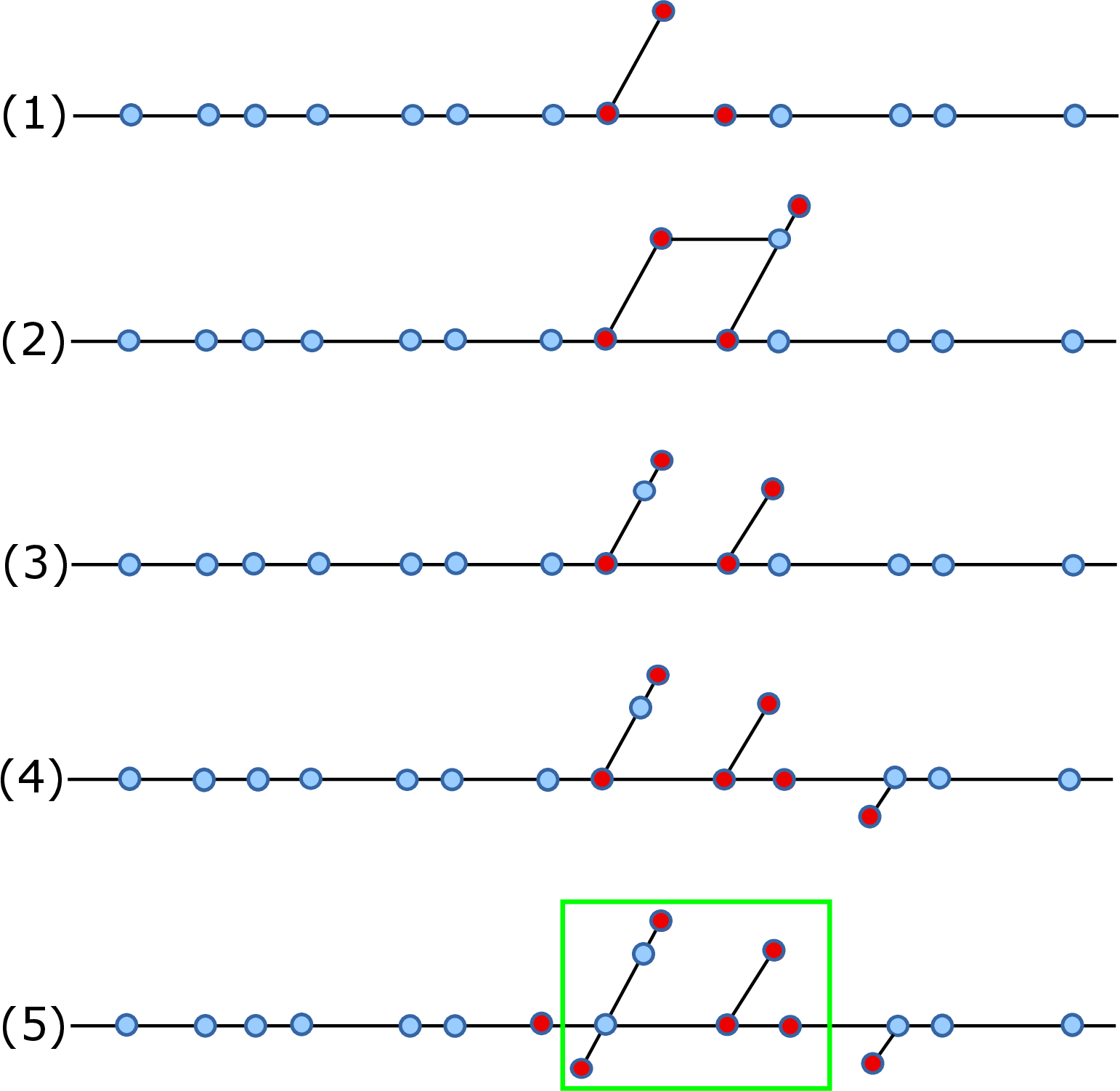}}
\caption{\small  (1) Ascending segment starts with $a_9^i$;  (2) A box surgery occurs if the next ascending segment is not lower; (3) The highest index of the ascending segment is exactly one less; (4) One more $a_9^{i+2}$ is added; (5) Insert an $a_8^i$ below $a_9^i$. }
\label{pattern2}
\end{figure}

\begin{figure}[htbp]
\labellist
\tiny \hair 2pt
\pinlabel $a_9^i$ at 395 715
\pinlabel $a_9^{i+1}$ at 490 715
\pinlabel $a_8^{i+1}$ at 470 693

\pinlabel $a_9^i$ at 395 570
\pinlabel $a_9^{i+1}$ at 475 570
\pinlabel $a_9^{i+2}$ at 525 570
\pinlabel $a_8^{i+2}$ at 490 544

\pinlabel $a_9^i$ at 395 420
\pinlabel $a_9^{i+1}$ at 475 420
\pinlabel $a_9^{i+2}$ at 525 420
\pinlabel $a_8^{i+2}$ at 490 392

\pinlabel $a_9^i$ at 395 250
\pinlabel $a_9^{i+1}$ at 475 250
\pinlabel $a_9^{i+2}$ at 525 250
\pinlabel $a_8^{i+2}$ at 490 225

\pinlabel $a_9^i$ at 400 90
\pinlabel $a_8^i$ at 375 62
\pinlabel $a_9^{i+1}$ at 473 85
\pinlabel $a_9^{i+2}$ at 522 90
\pinlabel $a_8^{i+2}$ at 485 62

\endlabellist
\centering
\scalebox{.90}{\includegraphics[width=0.90\textwidth]{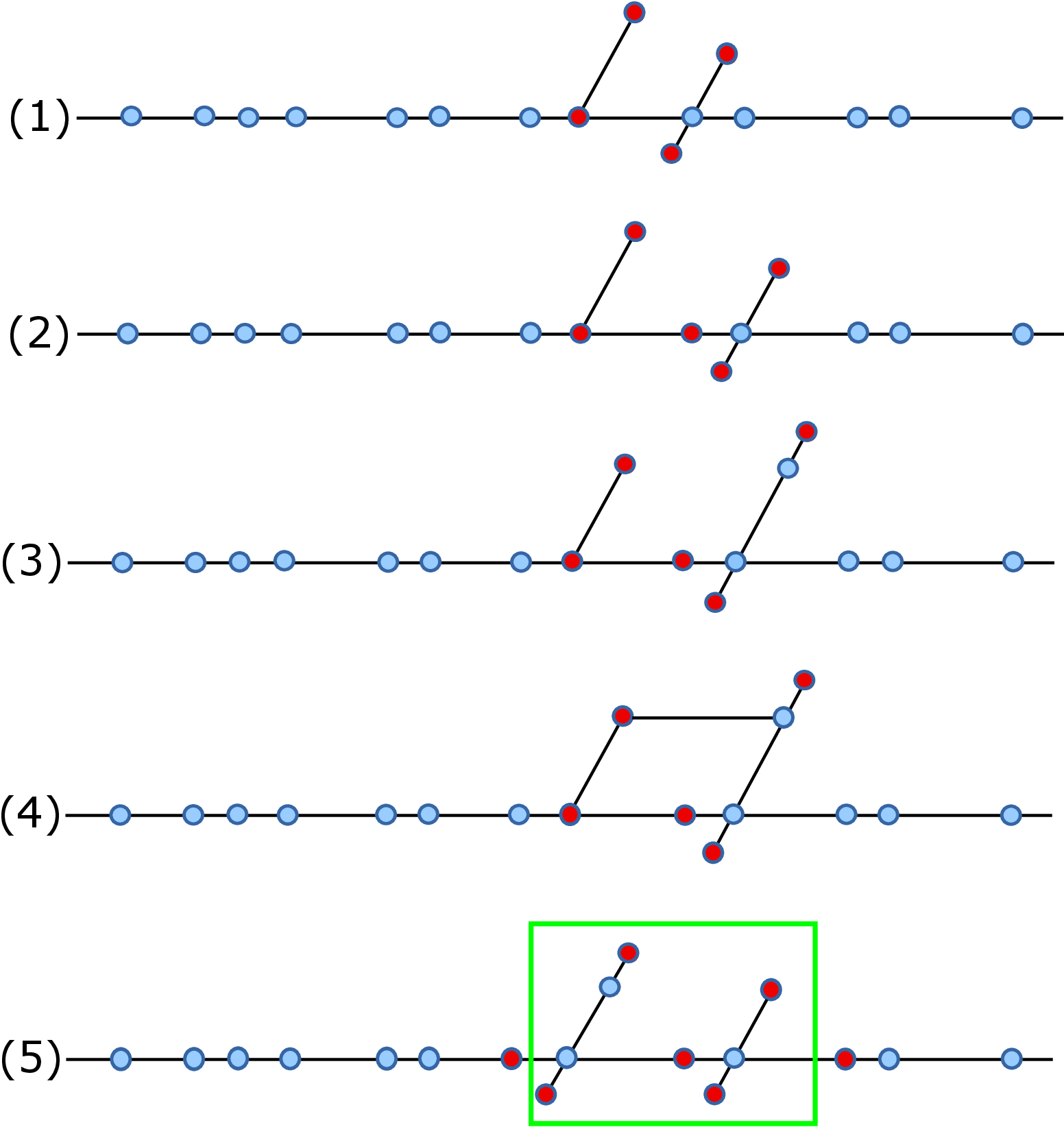}}
\caption{\small (1) An $a_8^{i+1}$ is below $a_9^{i+1}$;  (2) One more $\a_9$ is inserted; (3) Next ascending segment is not lower; (4) A hexagon surgery occurs; (5) The highest index of the ascending segment is exactly one less than that of the previous segment. }
\label{pattern3}
\end{figure}

On the other hand, if $a_9^{i+1}$ does have $a_8^{i+1}$, then we can insert an additional $\a_9$ as illustrated in (2) of Figure~\ref{pattern3}. That means the additional $\a_9$ is $a_9^{i+1}$ and the original $a_9^{i+1}$ becomes $a_9^{i+2}$. The (3) and (4) in Figure~\ref{pattern3} show that the next ascending segment should be lower. Otherwise, a degenerate hexagon surgery occurs. To reduce the indices slowly, the highest index of the ascending segment should be one less. The green box in Figure~\ref{pattern3} contains another local optimal pattern.    

Both local optimal patterns have three $\a_9$'s, one of which is a single dot, and the other two are in two ascending segments. The highest index of the right one is exactly one less than that of the left one. The difference lies in the number of $\a_8$'s. The first pattern only contains a single $\a_8$, while the second one has two $\a_8$'s.

\begin{figure}[htbp]
\centering
\scalebox{.90}{\includegraphics[width=0.90\textwidth]{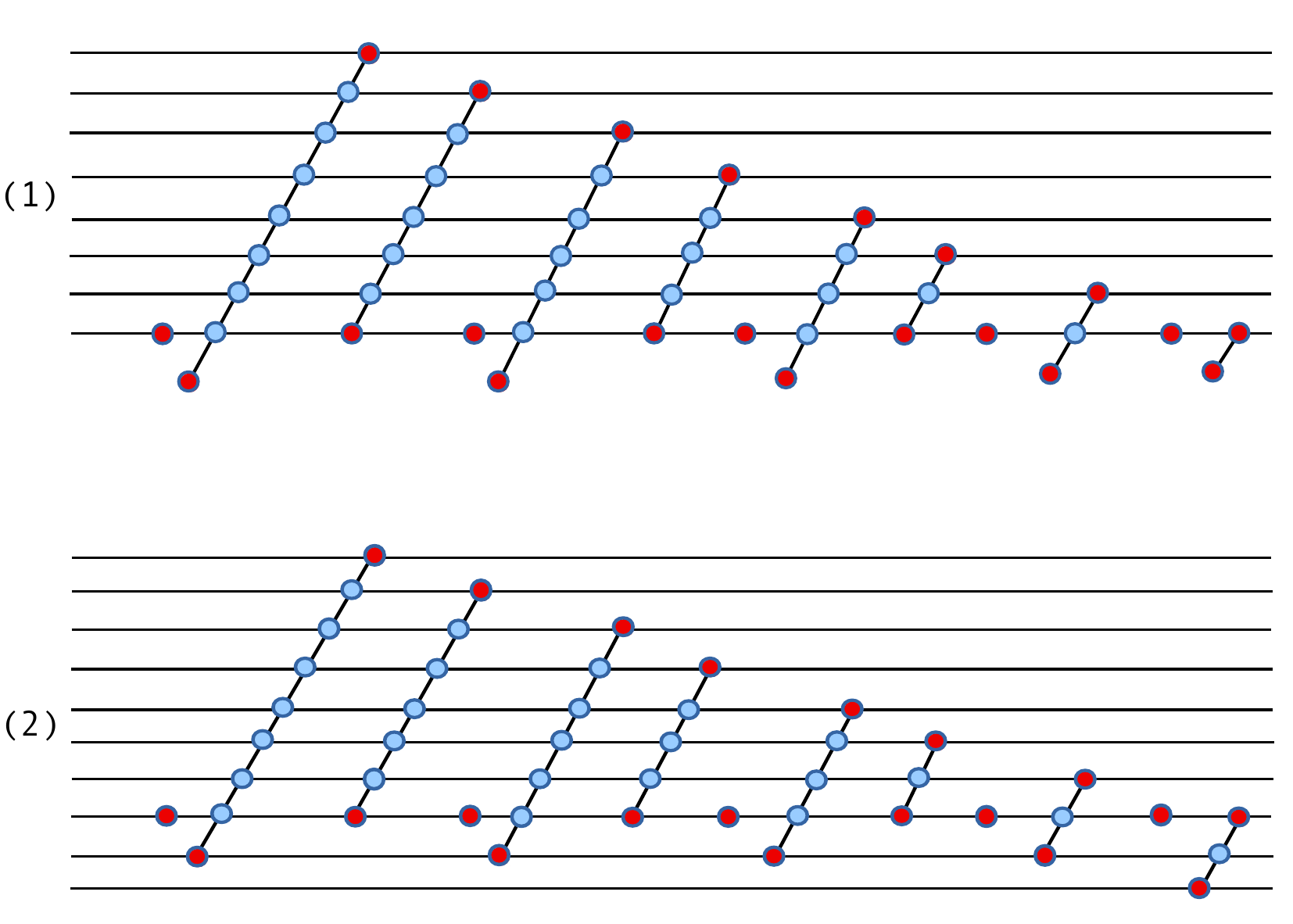}}
\caption{\small  (1) Iterate the first pattern; (2) Extend the dot graph below $\a_9$ level. }
\label{pattern4}
\end{figure}

\begin{figure}[htbp]
\centering
\scalebox{.80}{\includegraphics[width=0.90\textwidth]{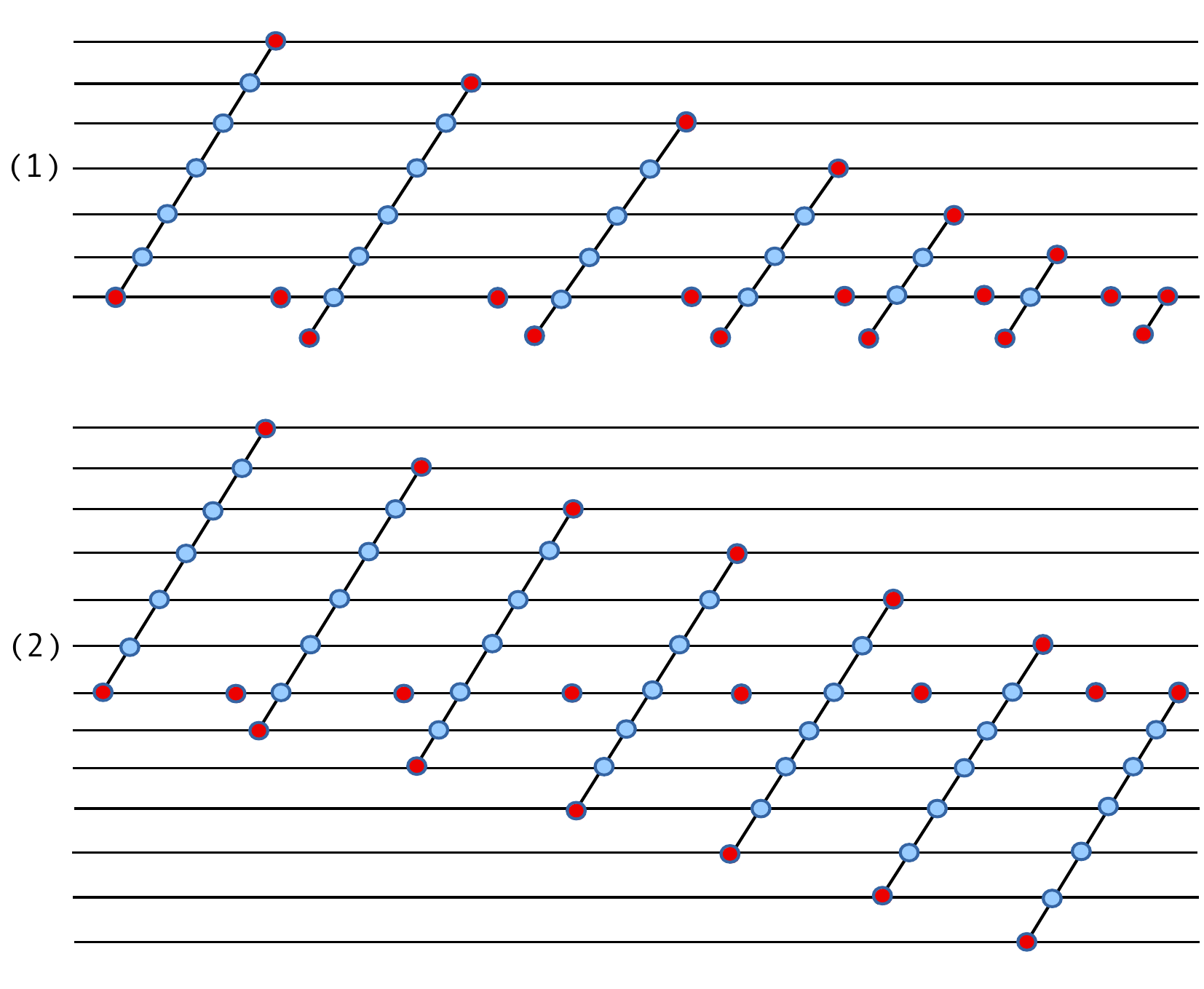}}
\caption{\small (1) Iterate the second pattern; (2) Extend the dot graph below $\a_9$ level.  }
\label{pattern5}
\end{figure}

To put it simple, if we iterate the two patterns independently, we will have the top illustrations in Figures~\ref{pattern4} \& \ref{pattern5}. The violation of hexagon surgery forces the highest index of ascending segments is larger than that on the right. To be optimal, the highest indices decrease exactly one as we move along. If we extend the graph below $\a_9$, we will observe a big difference between the two patterns. 

In the first pattern, since there is only one $\a_8$ in each pattern, it is unnecessary to add more $\a_8$ or $\a_7$ except for the rightmost ascending segment. It will violate the efficiency if we don't put an $\a_7$. The situation is quite different in the second pattern, as a hexagon surgery occurs in each pattern. To get rid of all these surgeries, we add the ascending segments as in Figure~\ref{pattern5}.  In comparison to the previous pattern, it will significantly increase the distance, so it is not the worst scenario. It follows that the worst case only consists of the first pattern. Generally, two patterns can be mixed, but each second pattern will add more curves in the rainbow. Hence, the worst scenario only contains the first pattern.

With this in mind, we define the function $S(k)$ to be the difference of the highest index $j$ and the lowest index $i$ in Figure~\ref{pattern4} type pattern that has $k$ dots at $\a_9$.  Same to the first rainbow, we can construct a new curve by the subarc $\a_i$ and $\g$, denoted as $\a_i'$. It follows that $d(\a_{j+1},\a_i')\geq j - i - 1 = S(k)-1$, because $d(\a_i',\a_i)\leq 2$. Each pattern has three dots, so we have the following three cases for $S(k)$.

\begin{figure}[htbp]
\centering
\scalebox{.90}{\includegraphics[width=0.90\textwidth]{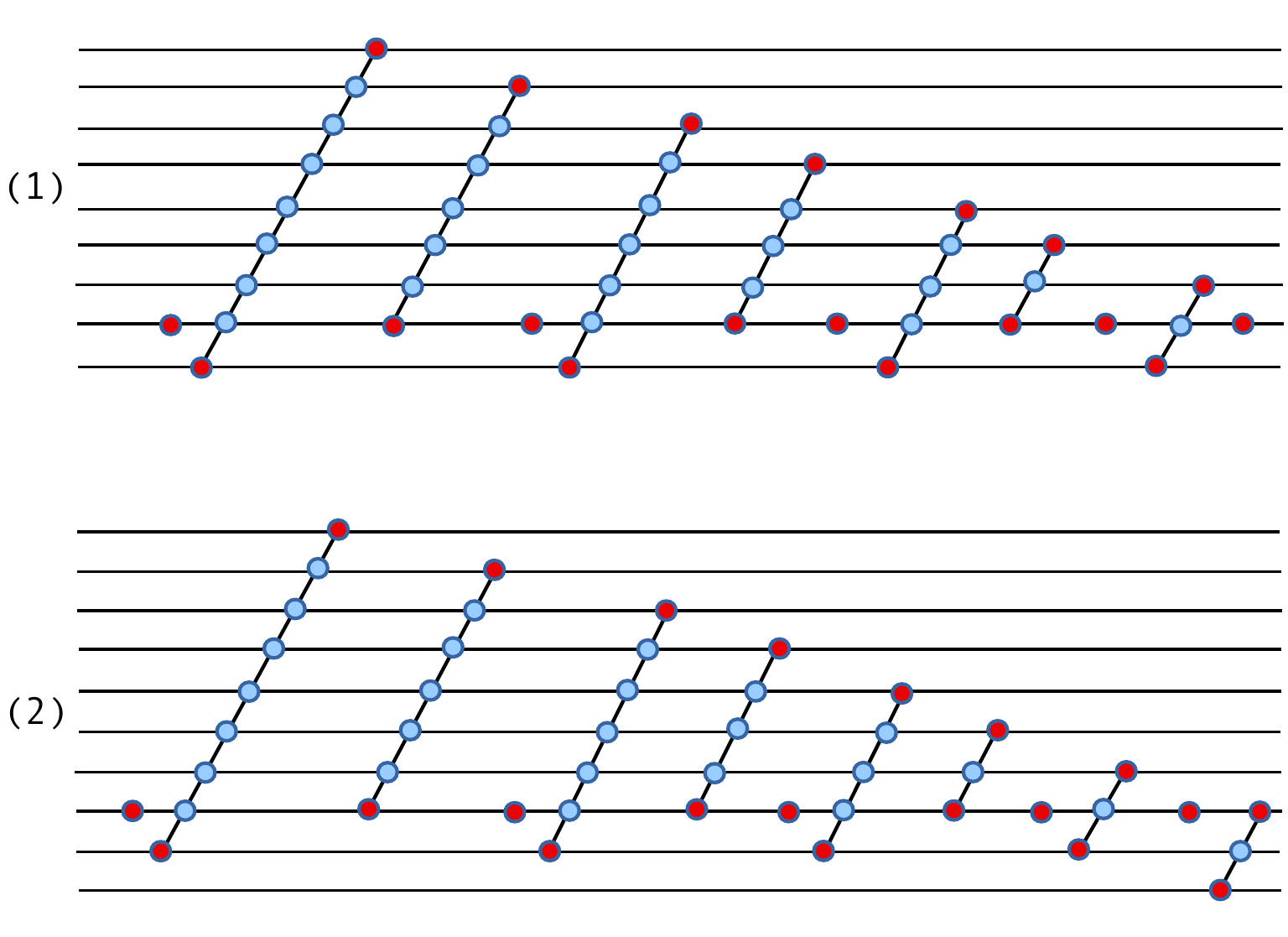}}
\caption{\small (1) The case when $k=12$, $S(12)=8$; (2) The case when $k=13$, $S(13)=9$.}
\label{pattern6}
\end{figure}

In Figure~\ref{pattern4},  the intersection number $k = 13$, and we have $S(13)=9$. When $k=12$, we can remove the rightmost ascending segment, as shown in Figure~\ref{pattern6}. In this case, $S(12)=8$. When $k=14$, either we can add a leftmost ascending segment, or add two leftmost ascending segment and remove the rightmost one. Figure~\ref{pattern7} illustrates this case, but we have $S(k)=10$ for both cases.

\begin{figure}[htbp]
\centering
\scalebox{.90}{\includegraphics[width=0.90\textwidth]{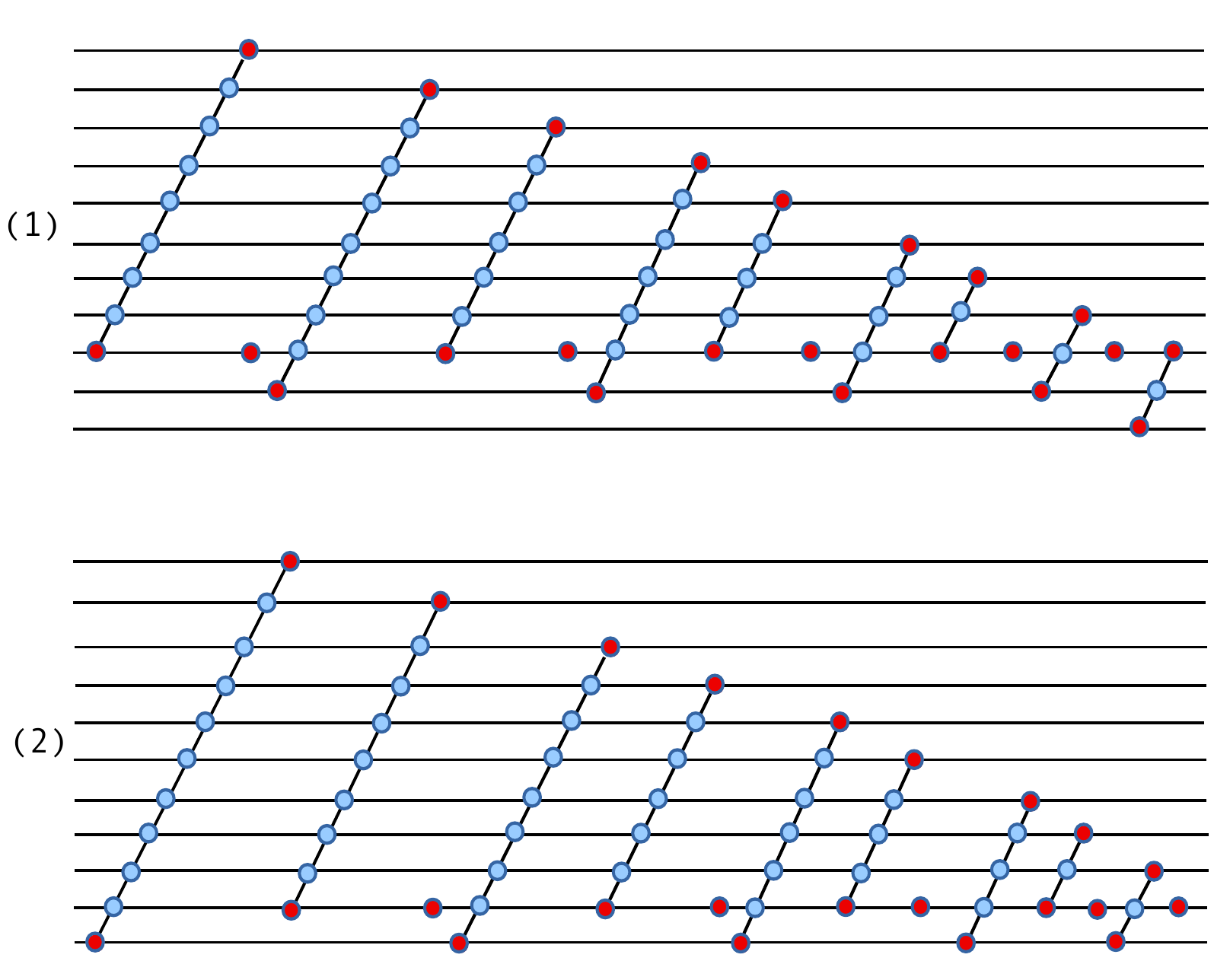}}
\caption{\small (1) and (2) are two cases when $k=14$, $S(14)=10$. }
\label{pattern7}
\end{figure}

More generally, the formula of $S(k)$ is as follows.

\begin{equation}
\label{equation}
    S(k)= 
\begin{cases}
    2p ,& \text{if } k = 3p, \ \text{for} \ p \geq 1\\
    2p + 1,              & \text{if } k = 3p + 1, \ \text{for} \ p \geq 1\\
    2p + 2,              & \text{if } k = 3p + 2, \ \text{for} \ p \geq 1
\end{cases}
\end{equation}

\end{proof}

\section{Proof of super efficiency}
\label{section: super efficiency}

Now we can repeat our previous calculations in section \ref{section: Obstruction to surgeries} to prove the main theorem.  

\begin{proof}[Proof of Theorem \ref{theorem: super efficiency}]

For the case $ g >2$, substituting $3p$  for $k$ on the right and $S(k)-1 = 2p - 1$ for $k$ on the left in equation \ref{genus-g-a1}, we get
\begin{equation}
\frac{2^{(2p - 1) -3} \cdot (2g-1)}{6g - 8} > 3p - 1 .
\end{equation}

In the case of  $g\geq 3$, the solution is $p \geq 5$, so $k = 15$.

Let us deal with the other two cases. When $k = 3p + 1$, 
\begin{equation}
\label{highest k}
\frac{2^{((2p+1) - 1) -3} \cdot (2g-1)}{6g - 8} > (3p+1) - 1 .
\end{equation}

In the case that $g = 3, 4, 5, 6, 7$, the solution is $p \geq 4$, so $k = 13$. It is $p \geq 5$ for $g \geq 8$, so $k=16$.

The other case is $k = 3p + 2$, and we have
\begin{equation}
\frac{2^{((2p+2) - 1) -3} \cdot (2g-1)}{6g - 8} > (3p+2) - 1 .
\end{equation}

The solution is $p \geq 4$, so $k = 14$ for $g\geq 3$.

Now returning to our previous equation \ref{genus-g-a1} calculation, we need to raise the value of $k$ from $8$ to $k = 16$ for $g \geq 3$.
To do this we need $| \alpha_1 \cap \g | > 15 \cdot (6g - 8 )$ .

For genus $2$, our equations to solve are

\begin{equation}
\frac{2^{(2p - 1) - 4} \cdot (12)}{4} > 3p - 1 .
\end{equation}

\begin{equation}
\frac{2^{((2p+1) - 1) - 4} \cdot (12)}{4} > (3p+1) - 1 .
\end{equation}

\begin{equation}
\frac{2^{((2p+2) - 1) - 4} \cdot (12)}{4} > (3p+2) - 1 .
\end{equation}

For the three cases, the solutions are $p \geq 4$, $p \geq 3$ and $p \geq 3$, then $k = 12$, $k=10$ and $k=11$, respectively.

Now returning back to our previous equation \ref{genus-2-a1} calculation, we need to raise the value of $k$ from $5$ to $k = 12$.
To do this we need $| \a_1 \cap \g | > 11 \cdot(4) = 44$, for $g = 2$.
Then we have our result  that the geodesic of minimal complexity is initially super  efficient with the stated bound.

\end{proof}

\begin{remark}
For genus $g \geq 7$,
it is not possible to improve the bounds of Theorem \ref{theorem: super efficiency} by carrying out the previous calculation with Bowditch's growth estimate in \S \ref{subsection: estimate}, where the estimate becomes
$$(\sqrt{2})^d \cdot (g-2) < |\a \cap \b|.$$
This observation can be illustrated by the Figure~\ref{comparison}.
\end{remark}

\begin{figure}[htbp]
\centering
\scalebox{0.60}{\includegraphics[width=0.80\textwidth]{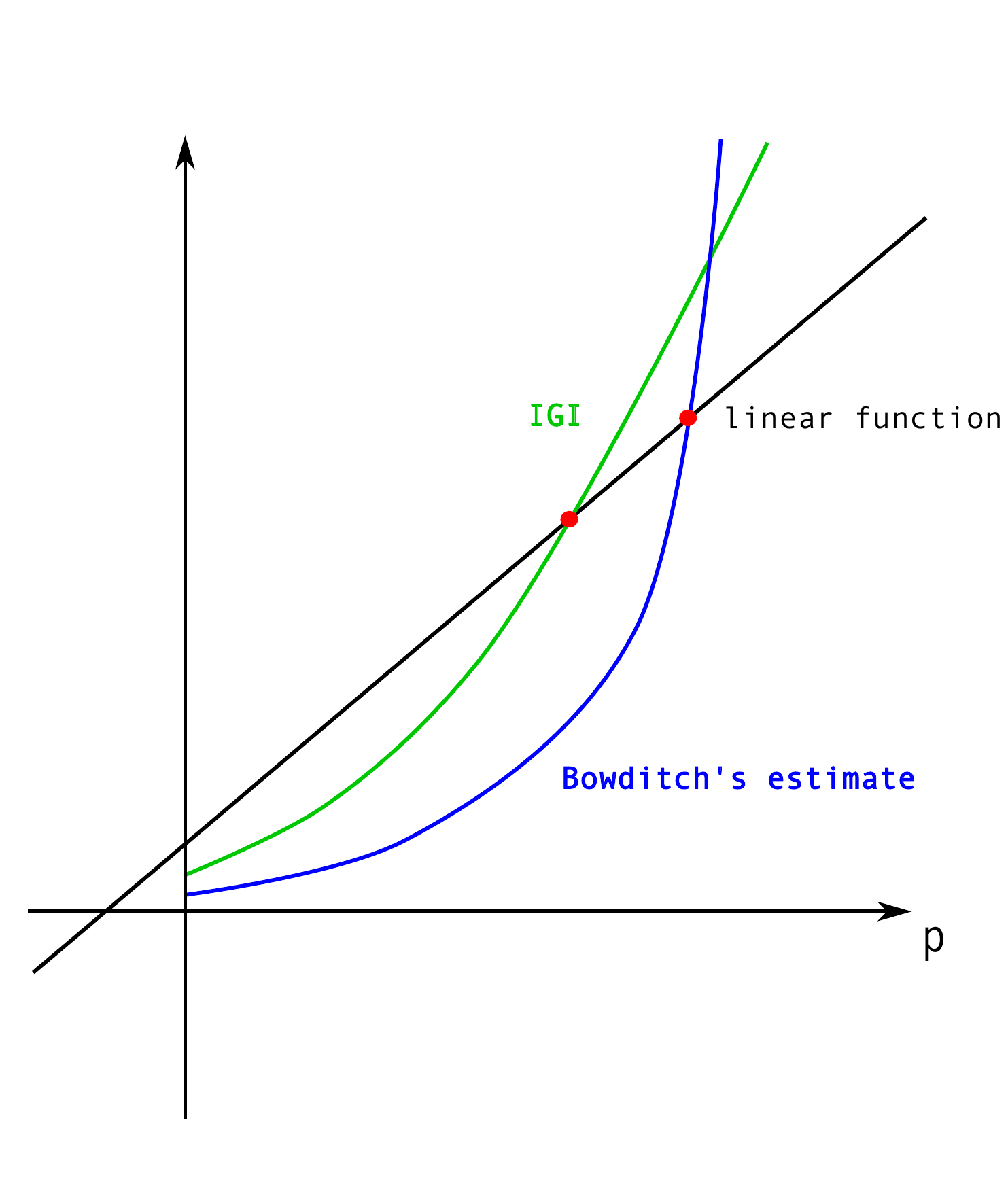}}
\caption{\small Three functions when the genus $g \geq 7$.}
\label{comparison}
\end{figure}

Once we have Theorem \ref{theorem: super efficiency}, the proof of Corollary \ref{corollary: candidates for v1} is analogous to that of Theorem 1.1 in \cite{BMM}.

\begin{proof}[Proof of Corollary \ref{corollary: candidates for v1}] 
 Let $\a_0, \a_1$ and $\a_d$ be pairwise minimal intersecting representatives of $v_0, v_1$ and $v_{\text{d}}$ without any triple points. For the polygons in $S_g \setminus (\a_0 \cup \a_d) $ that are non-hexagons, we can cut through them along some reference arcs to make them to be hexagons. With this in mind, we end up with polygons that are the rectangles and hexagons. By Euler characteristic calculation, there are $4g-4$ hexagons. Since the reference arcs in rectangles are parallel to the adjacent ones in the hexagons, then they are only counted once. It implies that there are $6g-6$ reference arcs in total. Up to homotopy, the intersection number of $\a_1$ with each reference arc determines $\a_1$. By  Theorem \ref{theorem: super efficiency}, the choice of intersection number is at most $15 \cdot (6g-8) + 1$ for each reference arc, so there are at most $[15 \cdot (6g-8) + 1]^{6g-6}$ candidates for $v_1$. For $g = 2$, the number of candidates is $45^{6}$.
 
\end{proof}

\section{subsection: algorithm}
\label{subsection: algorithm}
Continuing our discussion of \S~\ref{algorithm} concerning the implementation of a distance algorithm utilizing super efficient geodesics, the key calculation is resolving the value of $k$ for the equation \ref{genus-2-a1}.   We finally settle on $k = 16 $ as the smallest needed value when calculating the solution to equation \ref{highest k}.  Thus, we have the geometric condition that there must be fewer than $16$ parallel arcs of $\a_1 \setminus \g$ so as to prevent that growth of parallel arcs of $\a_{17} \setminus \g$, and other $\a_i {\rm 's}$.  This seems a very checkable condition for candidates of $\a_1$.  Viewed from this perspective and observing that this condition is independent of genus, the linear bounds on $|\a_1 \cap \g|$ in Theorem \ref{theorem: super efficiency} can be misleading.  Independent of distance and genus, the first step in distance algorithm based upon super efficient geodesics would be to produce a list of candidate curves for $\a_1$ with the requirement, that with respect to any reference arc, there are fewer than $16$ parallel arcs as understood in statement of Proposition \ref{proposition: calculate k}. 
 
Seen from this perspective, super efficient geodesics achieve a high degree of economy in both variables of distance and genus.  The cost of guaranteeing at least sixteen parallel arcs comes from the application of the pigeonhole principle as stated in the last sentence of the proof of Proposition \ref{proposition: calculate k}.

Having a {\em universal bound}---independent of distance and genus---on the number of parallel arcs of $\a_1 \cap \g$, for some reference arc, is striking.  At the moment there is no proof of $\delta$-hyperbolic utilizing the technology of efficient geodesics.  We know from \cite{Aougab, BB} that there is a uniform $\delta$, independent of genus.  Possessing this common attribute of universality, it is tempting to speculate that $ k = 16 $ can be leveraged to produce an efficient-geodesic-proof of hyperbolicity of the curve complex.

\bibliographystyle{plain}

\end{document}